\documentclass{amsart}

\usepackage[latin1]{inputenc}
\usepackage{amsfonts}
\usepackage{amsmath}
\usepackage{amsthm}
\usepackage{amssymb}
\usepackage{latexsym}
\usepackage{enumerate}

\newtheorem{theorem}{Theorem}[section]
\newtheorem{lemma}[theorem]{Lemma}
\newtheorem{proposition}[theorem]{Proposition}

\newtheorem{definition}[theorem]{Definition}

\numberwithin{equation}{section}

\begin{document}

\newcommand{\cc}{\mathfrak{c}}
\newcommand{\N}{\mathbb{N}}
\newcommand{\Q}{\mathbb{Q}}
\newcommand{\R}{\mathbb{R}}

\newcommand{\PP}{\mathbb{P}}
\newcommand{\forces}{\Vdash}
\newcommand{\dom}{\text{dom}}
\newcommand{\osc}{\text{osc}}

\title[On isomorphic embeddings]
{On universal spaces for the class of Banach spaces 
whose dual balls are uniform Eberlein compacts}

\author{Christina Brech}
\thanks{The first author was partially supported by FAPESP (2010/12639-1) and Pr\'o-reitoria de Pesquisa USP (10.1.24497.1.2).} 
\address{Departamento de Matem\'atica, Instituto de Matem\'atica e Estat\'\i stica, Universidade de S\~ao Paulo,
Caixa Postal 66281, 05314-970, S\~ao Paulo, Brazil}
\email{christina.brech@gmail.com}

\author{Piotr Koszmider}
\thanks{The second author was partially supported by Polish Ministry of
Science and Higher Education research grant N N201
386234. } 
\address{Institute of Mathematics, Polish Academy of Sciences,
ul. \'Sniadeckich 8,  00-956 Warszawa, Poland}
\address{Institute of Mathematics, Technical University of \L\'od\'z,
ul.\ W\'olcza\'nska 215, 90-924 \L\'od\'z, Poland}

\email{\texttt{piotr.math@gmail.com}}

\subjclass[2010]{Primary 46B26; Secondary 03E35, 46B03}

\begin{abstract}
For $\kappa$ being the first uncountable cardinal $\omega_1$ or
$\kappa$ being the cardinality of the continuum $\mathfrak{c}$, we prove that it is consistent that there is no Banach space of density $\kappa$ 
in which it is possible to isomorphically embed every Banach space
of the same density which has a uniformly G\^ateaux differentiable renorming
or, equivalently, whose dual unit ball with the weak$^*$ topology is a subspace of
a Hilbert space (a uniform Eberlein compact space). This complements a consequence of 
results of M. Bell and of M. Fabian, G. Godefroy, V. Zizler that 
assuming the continuum hypothesis, there is a universal space for all Banach spaces 
of density $\kappa=\mathfrak{c}=\omega_1$
which have a uniformly G\^ateaux differentiable renorming.
Our result implies, in particular, that $\beta \N\setminus \N$ may not map continuously onto a compact subset of a Hilbert space with the weak topology of density $\kappa=\omega_1$ or $\kappa=\mathfrak{c}$ and that a $C(K)$ space for some uniform Eberlein compact space $K$ may not embed isomorphically into $\ell_\infty/c_0$.
\end{abstract}

\maketitle

\section{introduction}

A classical result of Banach and Mazur states that every separable Banach space can be isometrically embedded into the Banach space $C([0,1])$. In this paper we deal with the problem of embedding nonseparable Banach spaces of a given class into a single nonseparable space. We are interested in the two most important uncountable densities, $\omega_1$ and $\mathfrak{c}$. Since a continuous onto map $\phi: K\rightarrow L$ (for $K,L$ compact Hausdorff spaces) gives an isometric embedding $T(f)=f\circ \phi$ of the Banach space $C(L)$ into $C(K)$, we have the following strictly related notions of universality:

\begin{definition} Let $\mathcal X$ be a class of Banach spaces. We say that $X\in \mathcal X$ is  (isometrically) universal space for the class $\mathcal X$ if for every $Y\in \mathcal X$ there is an (isometric) isomorphism onto its range $T:Y\rightarrow X$. 

Let $\mathcal K$ be a class of compact Hausdorff spaces. We say that $K\in \mathcal K$ is a universal space for the class $\mathcal K$ if for every $L\in \mathcal K$ there is a continuous onto map $T:K\rightarrow L$. 
\end{definition}

The following proposition shows that universal compact Hausdorff spaces and universal Banach spaces are closely related:

\begin{proposition}\label{compactbanach} 
Suppose $\mathcal K$ is a class of compact Hausdorff spaces and $\mathcal X$ is a class of Banach spaces such that for each $K\in \mathcal K$, $C(K)\in \mathcal X$ and for each $X\in \mathcal X$, the dual
unit ball $B_{X^*}$ with the weak$^*$ topology is in $\mathcal K$. If $K$ is a universal compact space for $\mathcal K$, then $C(K)$ is an isometrically universal Banach space for $\mathcal X$.
\end{proposition}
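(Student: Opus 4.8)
The plan is to chase the two standard functorial constructions linking compact Hausdorff spaces with $C(K)$ spaces, so the argument should be essentially routine; the work is just in seeing where the hypotheses are used. Fix a compact space $K$ that is universal for $\mathcal K$. By the first hypothesis $C(K)\in\mathcal X$, so $C(K)$ is a legitimate candidate, and it remains to produce, for an arbitrary $Y\in\mathcal X$, an isometric isomorphism of $Y$ onto a subspace of $C(K)$.

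First I would recall the canonical evaluation embedding. Given $Y\in\mathcal X$, put $L=B_{Y^*}$ with the weak$^*$ topology; by the second hypothesis $L\in\mathcal K$. Define $e_Y\colon Y\to C(L)$ by $e_Y(y)(y^*)=y^*(y)$ for $y^*\in L$. Each function $e_Y(y)$ is weak$^*$-continuous on $L$ by the very definition of the weak$^*$ topology, so $e_Y(y)\in C(L)$; the map is clearly linear; and $\|e_Y(y)\|_\infty=\sup_{y^*\in B_{Y^*}}|y^*(y)|=\|y\|$ by the Hahn--Banach theorem. Hence $e_Y$ is a linear isometric embedding of $Y$ into $C(L)$.

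Next I would invoke universality of $K$: since $L\in\mathcal K$, there is a continuous surjection $\phi\colon K\to L$. The induced composition operator $T_\phi\colon C(L)\to C(K)$, $T_\phi(f)=f\circ\phi$, is linear, and because $\phi$ is \emph{onto} one has $\|f\circ\phi\|_\infty=\sup_{k\in K}|f(\phi(k))|=\sup_{\ell\in L}|f(\ell)|=\|f\|_\infty$, so $T_\phi$ is an isometric embedding. Composing, $T_\phi\circ e_Y\colon Y\to C(K)$ is the desired isometric isomorphism onto its range. Since $Y\in\mathcal X$ was arbitrary, $C(K)$ is an isometrically universal Banach space for $\mathcal X$.

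There is no genuine obstacle in the proof; the three places where the hypotheses enter are (i) $C(K)\in\mathcal X$, needed merely so that the universal object is itself a member of the class; (ii) $B_{Y^*}\in\mathcal K$, needed so that universality of $K$ can be applied to it; and (iii) surjectivity of $\phi$, which is exactly what upgrades $T_\phi$ from a norm-nonincreasing operator to an isometry.
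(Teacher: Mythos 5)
Your proof is correct and follows exactly the paper's argument: embed $Y$ isometrically into $C(B_{Y^*})$ via the canonical evaluation map, use universality of $K$ to get a continuous surjection onto $B_{Y^*}$, and compose with the induced isometric embedding $C(B_{Y^*})\to C(K)$. You merely spell out the standard details (Hahn--Banach for the isometry of the evaluation map, surjectivity of $\phi$ for the isometry of the composition operator) that the paper leaves implicit.
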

\begin{proof} Let $X\in \mathcal X$. It is well known that $X$ isometrically embeds into $C(B_{X^*})$. By the hypothesis $B_{X^*}\in \mathcal K$ and so $K$ continuously maps onto $B_{X^*}$, which means that $C(B_{X^*})$ isometrically embeds into $C(K)$. Composing this isometric embedding with the isometric embedding of $X$ into $C(B_{X^*})$, we obtain the desired embedding of $X$ into $C(K)$.
\end{proof}

Let $\kappa$ be an infinite cardinal. Examples of pairs of classes $\mathcal K$ and $\mathcal X$ satisfying the hypothesis
of the above proposition are:
\begin{itemize}
\item the class of all compact spaces of weight $\leq \kappa$ and the class of all Banach spaces of density $\leq \kappa$; 
\item the class of all Eberlein compact spaces (compact subsets of Banach spaces with the weak topology) of weight $\leq \kappa$ and the class of all weakly compactly generated Banach spaces (WCG) of density $\leq \kappa$;
\item the class of all uniform Eberlein compact spaces (compact subsets of Hilbert spaces with the weak topology) 
of weight $\leq \kappa$ and the class of all Banach spaces which have a uniformly G\^ateaux differentiable renorming
(UG) of density $\leq \kappa$.
\end{itemize}

The nonexistence of a universal Banach space implies, by the above, the nonexistence
of a corresponding universal compact space, but the opposite direction is not immediate. There are some 
reasons for this. First, $C(K)$ may be universal without $K$ being universal, as in the case
of $K=[0,1]$ for the class of separable Banach spaces and metrizable compact spaces
(however $C(K)$ is isomorphic to $C(\Delta)$ where $\Delta$ is the Cantor set, which is
universal for metrizable compact spaces). Secondly,
 there are universal Banach spaces which are not isometrically universal.
For example, consider a strictly convex renorming ($||x+y||=2$ for $||x||=||y||=1$ implies $x=y$) of $C([0,1])$ (which exists by Theorem 9 of \cite{clarkson}) and notice that this space continues to be an isomorphically universal space but cannot isometrically include spaces whose norm is not strictly convex.

In the case of nonseparable Banach spaces we encounter mainly negative results
concerning universality or results showing that to obtain a universal space one has
to assume additional set-theoretic axioms. For example,
it was proved in \cite{argyrosben} that there are no universal
WCG Banach spaces of density $\omega_1$ or $\mathfrak{c}$ nor universal Eberlein compact
spaces of weight $\omega_1$ or $\mathfrak{c}$.

Assuming the continuum hypothesis, the compact space $\beta \N\setminus \N$ is a
universal compact space of weight $\mathfrak{c}$ and by the above proposition
$\ell_\infty/c_0\equiv C(\beta\N\setminus \N)$ is an isometrically universal Banach space for the class of 
spaces of density $\mathfrak{c}=\omega_1$. 
However, it was shown in \cite{universal} that it is consistent that there is no isomorphically universal
space of density $\mathfrak{c}$. K. Thompson and second author noted that a 
version of the proof from \cite{universal} gives the consistency of the nonexistence
of a universal Banach space of density $\omega_1$. 

In \cite{bell} M. Bell showed that assuming $\kappa^\omega=\kappa$ 
there is a universal uniform Eberlein compact space of weight $\kappa$. M. Fabian, G. Godefroy and V. Zizler showed that
the class of Banach spaces satisfying the hypothesis of Proposition \ref{compactbanach} corresponding to the class of uniform Eberlein compact spaces is the class of UG Banach spaces. So, these two results imply by Proposition \ref{compactbanach} that
there are universal UG Banach spaces of density $\mathfrak{c}=\omega_1$. 
M. Bell also showed in \cite{bell} that it is consistent that there is no universal Eberlein compact
of density $\omega_1$.
The main result of this paper is to complement the above result by its Banach space version.
Actually the full result is considerably stronger:

\begin{theorem} \label{maintheorem}
For $\kappa = \omega_1$ and for $\kappa = \mathfrak{c}$, it is consistent that  there
is no Banach space $X$ of density $\kappa$ such that every UG Banach space of density $\kappa$ embeds into $X$. In particular, there is no universal Banach space for the class of
UG Banach spaces of density $\kappa$.
\end{theorem}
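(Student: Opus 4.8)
The plan is to force over a model of GCH (in the case $\kappa=\omega_1$), respectively over a model in which $\mathfrak c=2^{\omega_1}=\omega_2$ so that $\kappa=\omega_2$ is regular with $2^{<\kappa}=\kappa$ (in the case $\kappa=\mathfrak c$), and to refute universality directly in the extension. By the Fabian--Godefroy--Zizler theorem quoted in the introduction the UG spaces are exactly the ones whose dual ball is uniform Eberlein, so by the proof of Proposition~\ref{compactbanach} a universal UG space of density $\kappa$ must isomorphically contain $C(K)$ for every uniform Eberlein compactum $K$ of weight $\le\kappa$. Hence it suffices to arrange that in the extension, for every Banach space $X$ of density $\kappa$ there is a uniform Eberlein compactum $K$ of weight $\le\kappa$ with $C(K)$ not isomorphically embeddable into $X$.

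The compacta will be ``adequate''-type compacta $K_{\mathcal A}\subseteq B_{\ell_2(\kappa)}$: to a downward-closed family $\mathcal A$ of finite subsets of $\kappa$ one associates the pointwise-closed set $K_{\mathcal A}=\{0\}\cup\{|a|^{-1/2}\mathbf{1}_a:a\in\mathcal A\}$, a uniform Eberlein compactum of weight $\le\kappa$ (a pointwise-compact norm-bounded subset of a Hilbert space). In $C(K_{\mathcal A})$ the coordinate functions $f_\alpha(x)=x_\alpha$ are norm-one, and for finite $a\subseteq\kappa$ one has $\|\sum_{\alpha\in a}f_\alpha\|=\sqrt{|a|}$ if $a\in\mathcal A$, while $\|\sum_{\alpha\in a}f_\alpha\|\le1$ if no member of $\mathcal A$ contains two elements of $a$; thus finite configurations of the $f_\alpha$'s have norms that record $\mathcal A$ and do so with \emph{unbounded} separation. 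Let $\mathbb Q=\mathbb Q_\kappa$ be the poset of finite (case $\kappa=\omega_1$), respectively ${<}\kappa$-sized (case $\kappa=\mathfrak c$), approximations to a generic such $\mathcal A$, and let $\PP$ be the product of $\kappa^+$ copies of $\mathbb Q$ with finite (resp.\ ${<}\kappa$) supports. In the first case $\PP$ is c.c.c., in the second ${<}\kappa$-closed, and in both $\kappa^+$-c.c.; so $\PP$ preserves cardinals and one checks that $V[G]$ still has the cardinal arithmetic demanded by the two cases (in the second $\PP$ adds no reals, so $\mathfrak c=\kappa$ is preserved). Writing $G=(G_\xi)_{\xi<\kappa^+}$ and letting $K_\xi$ be the compactum coded by $G_\xi$, each $C(K_\xi)$ is a UG space of density $\le\kappa$ in $V[G]$.

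The localisation step uses the chain condition, the support restriction and a standard nice-name count: every Banach space $X$ of density $\le\kappa$ in $V[G]$, and every isomorphic embedding of some $C(K_\xi)$ into it, already lies in $V[G\restriction S]$ for some $S\subseteq\kappa^+$ with $|S|\le\kappa$, and the intermediate model $W:=V[G\restriction S]$ inherits the relevant arithmetic ($W\models\mathrm{CH}$ in the first case, $W\models$ ``$2^{<\kappa}=\kappa$'' in the second). Fixing such an $S$ for a given $X$ and a putative embedding and choosing any $\xi_0\in\kappa^+\setminus S$, the compactum $K_{\xi_0}$ is added by a forcing that is $\mathbb Q$-generic over $W\ni X$. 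The theorem is thus reduced to the combinatorial core: \emph{if $W$ is such an intermediate model, $X\in W$ has density $\le\kappa$, and $H$ is $\mathbb Q$-generic over $W$, then in $W[H]$ the space $C(K_H)$ does not isomorphically embed into $X$.}

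To prove the core statement one assumes $T\colon C(K_H)\to X$ is an embedding in $W[H]$ with constant $M=\|T\|\,\|T^{-1}\|$, sets $g_\alpha=Tf_\alpha$, fixes a dense $D\subseteq X$ of size $\le\kappa$ in $W$, and approximates each $g_\alpha$ by some $d_\alpha\in D$. The family $(g_\alpha)$ --- and, up to small errors, the $W$-approximation $(d_\alpha)\in D^\kappa$ --- records, through the norms of finite sums, enough of the generic $\mathcal A_H$; since $|D|\le\kappa$, a name-counting and reflection argument over $W$ captures the assignment $\alpha\mapsto d_\alpha$ on a set $A\subseteq\kappa$ of size $\kappa$ by a single object of $W$, after which the genericity of $H$ over $W$ is contradicted, the dense subsets of $\mathbb Q$ that force the decoded part of $\mathcal A_H$ over $A$ to disagree with the $W$-prediction being readily produced. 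The genuinely delicate point, and the main obstacle, is that $T$ is only an \emph{isomorphic} embedding: the two-valued invariant $\|f_\alpha+f_\beta\|\in\{1,2\}$ decodes $\mathcal A_H$ through $T$ only when $M<2$, so to treat an arbitrary distortion one must decode instead from an unbounded oscillation-type invariant of finite configurations of the $f_\alpha$'s (this is why one works inside the genuine Hilbert ball rather than with a bounded-support cube, and where $\osc$ enters), verifying that an isomorphic embedding distorts this invariant by at most the factor $M$ while the generic $\mathcal A_H$ drives it to infinity along $A$ and a space of density $\le\kappa$ cannot sustain it there. Granting the core statement, the steps combine to show that in $V[G]$ no Banach space of density $\kappa$ is universal for the UG spaces of density $\kappa$, which is Theorem~\ref{maintheorem}; the ``in particular'' assertions follow at once, since a continuous surjection onto, or an isometric copy inside, the space in question would in particular furnish an isomorphic embedding of the relevant $C(K)$.
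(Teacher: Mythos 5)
Your overall architecture matches the paper's: force with a product of $\kappa^+$ copies of a poset that generically adds a uniform Eberlein compactum of weight $\kappa$, argue that the data relevant to a putative universal $X$ has support of size $\kappa$, and use two incompatible amalgamations over a ``missed'' coordinate to defeat any embedding of bounded distortion (your $\sqrt{m}$-versus-$1$ separation on an adequate compactum plays the role of the paper's $m$-versus-$1$ separation on the Stone space of a generic (c)-algebra). However, there are two genuine gaps. First, your localisation step is exactly the move the paper explicitly declines to make (see the footnote in the $\omega_1$ proof): the complete space $X$ does \emph{not} lie in any intermediate model $V[G\restriction S]$ --- only a dense set with its norm does --- and, worse, your reduction is circular as written: you fix $S$ capturing ``$X$ and a putative embedding'' and then choose $\xi_0\notin S$, but the putative embedding is an embedding of $C(K_{\xi_0})$, so any support capturing it must contain $\xi_0$. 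This can be repaired (re-capture $T$ on a larger support $S''\ni\xi_0$, factor out the $\xi_0$-th coordinate, and work with a $\mathbb{Q}$-linear dense subspace and an approximate operator), but that repair is precisely the ``normed non-complete spaces over the rationals and approximations of linear operators'' the authors avoid by instead working in $V$ with the full product and pre-deciding, via maximal antichains $A_F$, whether $\Vert\sum_{\eta\in F}\dot v_\eta\Vert$ exceeds a fixed threshold.

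Second, the combinatorial core is asserted rather than proved, and it is the heart of the matter. What is needed is: (i) after a $\Delta$-system and isomorphism-of-conditions refinement, $m$ conditions carrying ``fresh'' indices $\xi_1(\alpha_1),\dots,\xi_m(\alpha_m)$ admit a common extension forcing the corresponding generators to be pairwise disjoint (norm of the sum $=1$) \emph{and} a different common extension forcing them to have a common point (norm of the sum $=m$), i.e., the analogues of Lemmas \ref{minimalAmalgamation} and \ref{strongAmalgamation}; (ii) the side conditions on the remaining coordinates can be simultaneously extended (precaliber $\omega_1$ of the finite-support product, respectively the $\omega_2$-c.c.\ argument under CH); and (iii) $m$ is chosen in advance so that $3\Vert\dot T^{-1}\Vert<m$, which is how an arbitrary distortion is defeated --- no oscillation-type invariant is needed. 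Your text gestures at all of this (``readily produced'', ``a name-counting and reflection argument'') and even flags the distortion issue as ``the main obstacle'' without resolving it; in the paper this obstacle dissolves once one sees that both amalgamations are available below conditions compatible with the already-decided norms of $\sum\dot v_{\eta_k}$, so that one of the two cases always yields a numerical contradiction. Finally, two smaller points: your set $\{0\}\cup\{|a|^{-1/2}\mathbf{1}_a:a\in\mathcal A\}$ is not pointwise closed (its closure contains points $c\,\mathbf{1}_b$ with $c<|b|^{-1/2}$), though this does not affect the norm estimates; and for $\kappa=\mathfrak c$ the paper does not use a ${<}\kappa$-closed, ${<}\kappa$-support product over a model of $2^{\omega_1}=\omega_2$, but a $\sigma$-closed countable-support product over a model of CH followed by adding $\omega_2$ Cohen reals to push $\mathfrak c$ up to $\omega_2$ --- your variant would need its own verification of the $\kappa^+$-c.c.\ and of cardinal arithmetic in the extension.
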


As in the case of the class of separable spaces the notions of universality and isometrical universality do not coincide
for the class of UG spaces: a universal UG Banach space may not be an isometrically universal UG Banach space and this follows from the fact that WCG Banach spaces can be renormed to be strictly convex (see \cite{amirlinden}).

The above theorem implies in particular that, consistently, $\ell_\infty/c_0$ does not
contain an isomorphic copy of  some UG Banach spaces of density $\mathfrak{c}$. This is related 
to some recent results of M. Dzamonja and L. Soukup \cite{mirnalajos} as well as to
recent results  of  S. Todorcevic (\cite{stevo})
concerning the consistent existence of $C(K)$s for $K$ Corson compact
which do not embed into $\ell_\infty/c_0$. 
Also, the arguments like in the proof
of Proposition \ref{compactbanach} give that $\beta\N\setminus \N$ cannot be mapped onto
some compact subset of a Hilbert space with density $K$ with the weak topology.

The notation is fairly standard, the Banach spaces terminology follows
\cite{fabian} and the set-theoretic terminology follows \cite{kunen}.
Given a function $f$, we denote by $G_f$ the graph of $f$. $Fn_{<\omega}(n,D)$
stands for all finite partial functions from $\{0,...n-1\}$ into $D$ and
$Fn_{<\omega}(\omega,D)$ (resp. $Fn_{\leq\omega}(\omega,D)$) for all finite (resp. countable) partial functions from $\omega$ into $D$. If $X$ is a set, then $[X]^1$ denotes the collection of one-element subsets of $X$.
If $\mathcal A$ is a Boolean algebra and $a\in\mathcal A$, then
$[a]$ denotes the basic clopen set of the Stone space of $\mathcal A$ determined by $a$, that is
the set of all ultrafilters of $\mathcal A$ containing $a$.
A partial order has precaliber $\omega_1$ if and only if every uncountable subset of it includes
an uncountable subset where every finite subset has a lower bound.

\begin{definition}\label{c-algebra}
A Boolean algebra $\mathcal A$ is a (c)-algebra if it is generated by elements $\{A_{\xi,i}: \xi<\kappa, i<\omega\}$ of countably many pairwise disjoint antichains $\mathcal A_i=\{A_{\xi,i}: \xi<\kappa\}$ (collections of pairwise disjoint elements of $\mathcal A$) such that 
\begin{enumerate}[\emph{(c)}]
\item $A_{\xi_1, i_1}\vee...\vee A_{\xi_m, i_m}\not=1_{\mathcal A}$ for any distinct pairs $(i_1,\xi_1), ...,(i_m, \xi_m) \in \omega \times \kappa$.
\end{enumerate}
\end{definition}

We use the fact proved in \cite{bell} (Theorems 2.1 and 2.2) based on the results of \cite{imageseberlein} that the Stone space of (c)-algebra $\mathcal A$ is a uniform Eberlein compact space. The link between uniform Eberlein compact spaces and UG Banach spaces is based on the following:

\begin{theorem}[Theorem 2, \cite{godefroyetal}]
$\ $
\begin{enumerate} 
\item  A  Banach  space  X  admits  an  equivalent  uniformly  G\^ateaux 
differentiable  norm  if and  only  if  the  dual  unit  ball  $B_{X^*}$ equipped  with  the  weak$^* $
topology is  a  uniform Eberlein  compact. 
\item A  compact  space  K  is  a  uniform  Eberlein  compact  if and  only  if $C(K)$
admits  an  equivalent uniformly  G\^ateaux 
differentiable  norm,  if and only  if  there  is  a Hilbert  space  $\mathcal H$  and  a 
bounded  linear operator  from  $\mathcal H$ onto  a dense  set  in $C(K)$. 
\end{enumerate}
\end{theorem}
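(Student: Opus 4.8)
The plan is to deduce the whole statement from part (1), proving (1) through the auxiliary equivalence that \emph{$(B_{X^*},w^*)$ is a uniform Eberlein compact if and only if there are a Hilbert space $\mathcal H$ and a bounded linear operator $S\colon\mathcal H\to X$ with dense range}. One direction of this auxiliary equivalence is soft: given such an $S$, its adjoint $S^*\colon X^*\to\mathcal H$ is injective because $S$ has dense range, and is weak$^*$-to-weak continuous, so it maps $(B_{X^*},w^*)$ homeomorphically onto a weakly compact subset of $\mathcal H$, which by definition is a uniform Eberlein compact. For the reverse direction one invokes the Benyamini--Rudin--Wage analysis of uniform Eberlein compacta (cf.\ \cite{imageseberlein}): from a topological embedding of $(B_{X^*},w^*)$ into the unit ball of some $\ell_2(\Gamma)$ one produces a \emph{total} family $\{x_\gamma:\gamma\in\Gamma\}\subseteq X$ with $\sum_{\gamma\in\Gamma}\langle x^*,x_\gamma\rangle^2\le 1$ for every $x^*\in B_{X^*}$, and then $S(e_\gamma):=x_\gamma$ extends via Cauchy--Schwarz to a bounded operator $\ell_2(\Gamma)\to X$ whose range is dense by totality of $\{x_\gamma\}$ and Hahn--Banach. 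Securing the $x_\gamma$ \emph{inside} $X$, rather than merely in $X^{**}$, is where the combinatorial structure of uniform Eberlein compacta is genuinely used, and it is the main obstacle in this step.

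Granting the auxiliary equivalence, for $(\Leftarrow)$ in (1) it suffices to construct an equivalent UG norm on $X$ from a dense-range bounded operator $S\colon\mathcal H\to X$, while for $(\Rightarrow)$ I argue directly. For $(\Leftarrow)$, put $q(x^*)^2:=\|x^*\|^2+\|S^*x^*\|_{\mathcal H}^2$; since $\|\cdot\|$ is a dual norm and $x^*\mapsto\|S^*x^*\|_{\mathcal H}$ is weak$^*$ lower semicontinuous, $q$ is an equivalent dual norm on $X^*$. I would then use the classical duality (see \cite{fabian}) that an equivalent norm on $X$ is uniformly G\^ateaux differentiable if and only if its dual norm is weak$^*$-uniformly rotund in every direction, and verify the latter for $q$: if $q(f_n)=q(g_n)=1$ and $q(f_n+g_n)\to 2$, then the parallelogram law in $\mathcal H$, applied to the Hilbertian summand, forces $\|S^*(f_n-g_n)\|_{\mathcal H}\to 0$; hence $\langle f_n-g_n,Se\rangle\to 0$ for every $e\in\mathcal H$, and since the range of $S$ is dense while $\|f_n-g_n\|$ stays bounded, a $3\varepsilon$-argument gives $\langle f_n-g_n,x\rangle\to 0$ for all $x\in X$ --- the uniformity required by the definition being exactly the uniform quadratic modulus of rotundity of a Hilbert space. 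For $(\Rightarrow)$, the same duality turns a UG norm on $X$ into the statement that its dual norm has weak$^*$-open slices of uniformly small diameter in each direction: for every $h\in S_X$ and $n\in\N$ there is $\delta>0$ so that any slice $\{f\in B_{X^*}:f(x)>1-\delta\}$ with $x\in S_X$ satisfies $|\langle f-g,h\rangle|<1/n$ for all $f,g$ in it. Feeding these families of slices --- for $h$ ranging over a norm-dense subset of $S_X$, and all $n$ --- into the Benyamini--Rudin--Wage criterion exhibits $(B_{X^*},w^*)$ as a uniform Eberlein compact; assembling the slices into the required uniformly point-finite coordinate families is the technical heart of this direction.

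Finally, (2) follows from (1) together with elementary permanence properties. If $K$ is uniform Eberlein, embed $K$ into the unit ball of some $\ell_2(\Gamma)$: the coordinate functions belong to $C(K)$ and the squares of their values sum to at most $1$, and adjoining to them all monomials (finite products, repetitions allowed, of coordinate functions), rescaled so that the squares of their values still sum to a bounded constant, yields a total family in $C(K)$ whose closed linear span is the unital subalgebra generated by the coordinate functions --- dense in $C(K)$ by the Stone--Weierstrass theorem; the associated operator from an $\ell_2$-space into $C(K)$ is thus bounded with dense range, so, applying the argument for $(\Leftarrow)$ in (1) to $X=C(K)$, $C(K)$ is UG, while this very operator is the promised Hilbert-space operator onto a dense set. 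Conversely, if $C(K)$ is UG then by $(\Rightarrow)$ of (1) the ball $(B_{C(K)^*},w^*)$ is uniform Eberlein, and $K$, homeomorphic via $k\mapsto\delta_k$ to a weak$^*$-closed subset of it, is uniform Eberlein because closed subspaces of uniform Eberlein compacta are again uniform Eberlein; and if $T\colon\mathcal H\to C(K)$ is bounded with dense range, then $T^*$ embeds $(B_{C(K)^*},w^*)$, hence its closed subspace $\{\delta_k:k\in K\}\cong K$, into a Hilbert ball with the weak topology, so $K$ is uniform Eberlein. This closes the cycle of equivalences in (2); the genuine content of the theorem lies in part (1) and, within it, in the interface with the Benyamini--Rudin--Wage combinatorics.
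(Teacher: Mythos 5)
This statement is not proved in the paper at all: it is quoted verbatim as Theorem~2 of \cite{godefroyetal} and used as a black box, so there is no ``paper's proof'' to compare against; I can only assess your argument on its own terms. Your overall architecture rests on the ``auxiliary equivalence'' that $(B_{X^*},w^*)$ is uniform Eberlein if and only if $X$ is Hilbert-generated (i.e.\ there is a bounded operator from a Hilbert space onto a dense subset of $X$), and that equivalence is false. The soft direction is fine, and your renorming computation (the dual norm $q(x^*)^2=\|x^*\|^2+\|S^*x^*\|^2$ being weak$^*$-URED, hence the predual norm UG) correctly shows that Hilbert-generated spaces are UG-renormable. But the converse direction you need --- from $B_{X^*}$ uniform Eberlein back to a total, square-summable family \emph{inside} $X$ --- is not merely ``the main obstacle''; it is unattainable. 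Take Rosenthal's non-WCG subspace $X_R$ of $L_1(\mu)$ for a suitable probability measure $\mu$: since $L_2(\mu)\hookrightarrow L_1(\mu)$ is bounded with dense range, $B_{L_1(\mu)^*}$ is uniform Eberlein, and $B_{X_R^*}$, being a continuous weak$^*$ image of it under the restriction map, is uniform Eberlein by Benyamini--Rudin--Wage; yet $X_R$ cannot be Hilbert-generated, since any Hilbert-generated space is WCG (the image of the Hilbert ball is weakly compact and generating). So the class of spaces with uniform Eberlein dual ball is strictly larger than the class of Hilbert-generated spaces, and your route to $(\Leftarrow)$ of (1) collapses.

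The repair is to reverse your logical dependency: prove the Hilbert-generation statement of (2) first --- your coordinate-functions-plus-monomials-plus-Stone--Weierstrass argument for ``$K$ uniform Eberlein $\Rightarrow$ $C(K)$ Hilbert-generated'' is essentially correct --- then deduce $(\Leftarrow)$ of (1) by applying it to $K=B_{X^*}$: $C(B_{X^*})$ is Hilbert-generated, hence UG-renormable by your renorming lemma, and $X$ embeds isometrically into $C(B_{X^*})$, so it inherits a UG norm because uniform G\^ateaux differentiability is trivially preserved under restriction to subspaces. (This also explains why the ``Hilbert space onto a dense set'' characterization appears in part (2), for $C(K)$ spaces, and not in part (1), for general $X$.) Separately, your $(\Rightarrow)$ direction of (1) --- from a UG norm to the uniformly point-finite families witnessing that $B_{X^*}$ is uniform Eberlein --- is only gestured at; you name the duality with weak$^*$-uniform rotundity in every direction and then defer the entire combinatorial construction to ``the technical heart,'' so that direction remains unproved as written.
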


The main idea of the proofs is to construct a large family of generic $c$-algebras and prove
that given a UG Banach space $X$ one of the algebras is suficiently generic over $X$ to
prevent a possibility of an isomorphic embedding of the Banach space $C(K)$
where K is the Stone space of the algebra. The $c$-algebras are constructed by
partial order of approximations (see definitions \ref{definitionP} and \ref{definitionQ}) and the method of forcing is used to make conclusions about their consistent existence  (see \cite{kunen}).

\section{Density $\omega_1$}

This section is devoted to the proof of the main result (Theorem \ref{maintheorem}) for the case
$\kappa=\omega_1$. The following partial order will approximate the generic $c$-algebras of
cardinality $\omega_1$, one of which will induce a Banach space UG without an isomorphic embedding into a given  Banach space UG.

\begin{definition}\label{definitionP}
$\mathbb{P}$ is the forcing notion consisting of conditions $p=(n_p, D_p, F_p)$ where $n_p \in \omega$, $D_p \in [\omega_1]^{<\omega}$ and $F_p \subseteq Fn_{<\omega}(n_p,D_p)$ are such that $|F_p| < \omega$ and $[n_p \times D_p]^1 \subseteq F_p$,
ordered by $p \leq q$ if $n_p \geq n_q$, $D_p \supseteq D_q$, $F_p \supseteq F_q$ and 
\begin{enumerate}[\emph{(P)}]
\item given $f \in F_p$, there is $g \in F_q$ such that $G_f \cap (n_q \times D_q) \subseteq G_g$.
\end{enumerate}
\end{definition}

Given a model $V$ and a $\mathbb{P}$-generic filter $G$ over $V$, for each $\xi \in \omega_1$ and each $i \in \omega$, we define in $V[G]$ the following set:
$$A_{\xi,i} = \{f \in Fn_{<\omega}(\omega,\omega_1): \exists p \in G\text{ such that } f\in F_p \text{ and } f(i)=\xi\}.$$
Let $\mathcal{B}$ be the 
subalgebra of the Boolean algebra $\wp(Fn_{<\omega}(\omega,\omega_1))$
generated by the sets $\{A_{\xi,i}: (i,\xi) \in \omega \times \omega_1\}$.

\begin{lemma}\label{densityP}
For every $\xi \in \omega_1$, $i \in \omega$ and every $q\in \PP$ there is
$p\leq q$ such that $\xi\in D_p$ and $i<n_p$.
\end{lemma}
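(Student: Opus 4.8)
The plan is to extend $q$ in the most economical way: enlarge $n_q$ past $i$, throw $\xi$ into $D_q$, and add to $F_q$ only the new singletons that the definition of a condition compels us to include. Concretely, I would set $n_p = \max\{n_q, i+1\}$, $D_p = D_q \cup \{\xi\}$, and $F_p = F_q \cup [n_p \times D_p]^1$, and claim that $p = (n_p, D_p, F_p) \in \mathbb{P}$, that $p \leq q$, and that $\xi \in D_p$ and $i < n_p$.

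That $p$ is a condition is immediate: $n_p \in \omega$, $D_p \in [\omega_1]^{<\omega}$, and $F_p$ is finite; moreover $F_q \subseteq Fn_{<\omega}(n_q,D_q) \subseteq Fn_{<\omega}(n_p,D_p)$ since $n_p \geq n_q$ and $D_p \supseteq D_q$, while $[n_p \times D_p]^1 \subseteq Fn_{<\omega}(n_p,D_p)$, so $F_p \subseteq Fn_{<\omega}(n_p,D_p)$; and $[n_p \times D_p]^1 \subseteq F_p$ holds by construction. The inclusions $n_p \geq n_q$, $D_p \supseteq D_q$, $F_p \supseteq F_q$ required for $p \leq q$ are also trivial, and $\xi \in D_p$, $i < n_p$ are clear from the choice of $D_p$ and $n_p$.

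The only point that requires an argument is clause (P), and this is the step I would regard as the heart of the proof. Given $f \in F_p$: if $f \in F_q$ we take $g = f$, since then $G_f \cap (n_q \times D_q) \subseteq G_f = G_g$. Otherwise $f \in [n_p \times D_p]^1$, say $f = \{(j,\eta)\}$ with $j < n_p$ and $\eta \in D_p$; then $G_f \cap (n_q \times D_q)$ equals $\{(j,\eta)\}$ when $j < n_q$ and $\eta \in D_q$, and equals $\emptyset$ otherwise. In the first case $\{(j,\eta)\} \in [n_q \times D_q]^1 \subseteq F_q$, so $g = f$ works; in the second case any $g \in F_q$ works (for instance a singleton from $[n_q \times D_q]^1$, or the empty function, which also covers the degenerate case $n_q \times D_q = \emptyset$). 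This establishes (P) and finishes the proof. There is no genuine obstacle: the construction is essentially forced, and the only content is the observation that restricting a newly introduced singleton to the old domain $n_q \times D_q$ either leaves it unchanged — in which case it was already legislated into $F_q$ — or annihilates it.
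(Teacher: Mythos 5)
Your proposal is correct and is essentially the paper's own proof: the same condition $p=(\max\{n_q,i+1\},\,D_q\cup\{\xi\},\,F_q\cup[n_p\times D_p]^1)$, with clause (P) verified by noting that each genuinely new singleton has graph meeting $n_q\times D_q$ trivially. Your case analysis is in fact slightly more careful than the paper's one-line justification (and correctly reads through the paper's typo $D_q=D_p\cup\{\xi\}$), so nothing further is needed.
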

\begin{proof}
Let $n_p=\max(i+1,n_q)$ and $D_q=D_p\cup\{\xi\}$, $F_p=F_q\cup[n_p\times D_p]^1$.
Clearly $p\in\PP$. As the graph of every new partial function in $F_p$ has empty
intersection with $(n_q \times D_q)$, we conclude that $p\leq q$.
\end{proof}

\begin{proposition}\label{UEP}
In $V[G]$, we have that for every $i \in \omega$, $(A_{\xi,i})_{\xi<\omega_1}$ are pairwise disjoint nonempty sets such that whenever $(i_1,\xi_1),...,(i_k,\xi_k), (i, \xi)$ are distinct
pairs, then 
$$A_{i,\xi}\setminus (A_{i_1,\xi_1}\cup...\cup A_{i_k,\xi_k})\not=\emptyset.$$
Therefore, the Boolean algebra $\mathcal{B}$ generated by them is a c-algebra of cardinality $\omega_1$ and its Stone space $K$ is a uniform Eberlein compact space of weight $\omega_1$.
\end{proposition}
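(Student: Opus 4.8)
The plan is to verify the combinatorial property (c) by a density/genericity argument, and then invoke the cited results to conclude the topological statement. First I would fix $i \in \omega$ and two distinct ordinals $\xi \neq \eta$ in $\omega_1$, and argue the sets $A_{\xi,i}$ and $A_{\eta,i}$ are disjoint: if $f \in A_{\xi,i} \cap A_{\eta,i}$, there would be $p, p' \in G$ with $f \in F_p$, $f(i) = \xi$ and $f \in F_{p'}$, $f(i) = \eta$, and since $G$ is a filter a common lower bound would give $f$ two values at $i$, a contradiction. Nonemptiness of each $A_{\xi,i}$ follows from Lemma \ref{densityP} together with the requirement $[n_p \times D_p]^1 \subseteq F_p$: once a condition $p \in G$ has $\xi \in D_p$ and $i < n_p$, the singleton function $\{(i,\xi)\} \in F_p$ witnesses $\{(i,\xi)\} \in A_{\xi,i}$.

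The heart of the argument is the nonemptiness of $A_{i,\xi} \setminus (A_{i_1,\xi_1} \cup \cdots \cup A_{i_k,\xi_k})$ for distinct pairs. The natural candidate element is a finite partial function $f$ with $f(i) = \xi$ and $f(i_j) \neq \xi_j$ for each $j$ (this is arrangeable since $\omega_1$ has more than $k+1$ elements and we only need to avoid finitely many values at each coordinate); the point $f$, viewed as an element of the Stone space, should land in $[A_{i,\xi}]$ and outside each $[A_{i_j,\xi_j}]$. To make this precise one shows that the set of conditions $p \in \PP$ such that some $f \in F_p$ has $f(i) = \xi$ and $f(i_j) \neq \xi_j$ for all $j$, and moreover $p$ decides (via condition (P) going downward) that this particular $f$ will never acquire the forbidden values, is dense below any given condition. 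Given $q$, one extends $q$ using Lemma \ref{densityP} so that $\xi, \xi_1, \dots, \xi_k \in D_q$ and $i, i_1, \dots, i_k < n_q$, then adds to $F_q$ a single new function $f$ defined exactly on $\{i, i_1, \dots, i_k\}$ with $f(i) = \xi$ and $f(i_j)$ chosen in $D_q$ distinct from $\xi_j$; since $G_f$ meets the old $n_q \times D_q$ in a graph already forced to be extended only within the old domain, condition (P) is satisfied and the extension is legitimate. By genericity some such $p$ lies in $G$, and then $f \in A_{i,\xi}$; and $f \notin A_{i_j,\xi_j}$ because property (P) guarantees that any $g \in F_{p'}$ for $p' \in G$ which agrees with some extension of $f$ restricts back to agree with $f$ on the coordinates where $f$ is defined, so $g(i_j) = f(i_j) \neq \xi_j$. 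The main obstacle, and the step deserving the most care, is exactly this last point: one must check that membership $f \in A_{i_j,\xi_j}$ genuinely forces $g(i_j) = \xi_j$ for the witnessing $g$ and that condition (P) propagates the disagreement back down to $f$ itself, so that the disagreement established at the level of $p$ is never overturned by a later condition in $G$.

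Finally, having established that $\{A_{\xi,i} : \xi < \omega_1\}$ for $i < \omega$ are countably many antichains of nonempty pairwise disjoint sets generating $\mathcal B$ and satisfying property (c) of Definition \ref{c-algebra}, we conclude $\mathcal B$ is a (c)-algebra; its cardinality is $\omega_1$ since it is generated by $\omega_1$ many elements and sits inside $\wp(Fn_{<\omega}(\omega,\omega_1))$, which has size $\mathfrak c \geq \omega_1$, while the generators already force the cardinality to be exactly $\omega_1$ (the antichains are genuinely uncountable by nonemptiness and disjointness). The Stone space $K$ of $\mathcal B$ then has weight $\omega_1$, and by the cited Theorems 2.1 and 2.2 of \cite{bell} (resting on \cite{imageseberlein}) the Stone space of any (c)-algebra is a uniform Eberlein compact space, which completes the proof.
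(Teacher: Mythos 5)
Your treatment of disjointness and nonemptiness is fine, but the central density argument has a concrete error, and it is also unnecessary. The error: you propose to extend $q$ (already arranged so that $i,i_1,\dots,i_k<n_q$ and $\xi,\xi_1,\dots,\xi_k\in D_q$) by adding to $F_q$ a function $f$ with domain $\{i,i_1,\dots,i_k\}$ and with all values chosen \emph{in} $D_q$. Then $G_f\subseteq n_q\times D_q$, so condition (P) for $p\leq q$ demands some $g\in F_q$ with $G_f=G_f\cap(n_q\times D_q)\subseteq G_g$, i.e.\ an already-present $g\in F_q$ extending $f$. If $k\geq 1$ such a $g$ need not exist (e.g.\ when $F_q$ consists only of the mandatory singletons $[n_q\times D_q]^1$), so your extension is not a legitimate condition below $q$ and the density claim fails as stated. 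The fix would be to pick the values $f(i_j)$ \emph{outside} $D_q$ (so that $G_f\cap(n_q\times D_q)=\{(i,\xi)\}$, which is covered by the singleton $\{(i,\xi)\}\in F_q$), but you explicitly do the opposite. A second, smaller confusion: you flag as the delicate point that some later $g\in F_{p'}$ extending $f$ might ``put $f$ into'' $A_{\xi_j,i_j}$. That cannot happen by definition: $f\in A_{\xi_j,i_j}$ requires $f$ itself to satisfy $f(i_j)=\xi_j$; no appeal to (P) or to extensions of $f$ is needed once $f(i_j)\neq\xi_j$.

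These complications are avoidable because the witness you already used for nonemptiness does the whole job, and this is exactly the paper's proof: by Lemma \ref{densityP} some $p\in G$ has $i<n_p$ and $\xi\in D_p$, hence the singleton $\{(i,\xi)\}\in[n_p\times D_p]^1\subseteq F_p$ lies in $A_{\xi,i}$; and for any distinct pair $(i_j,\xi_j)$ the function $\{(i,\xi)\}$ does not take the value $\xi_j$ at $i_j$, so it lies outside $A_{\xi_j,i_j}$ independently of $G$. Pairwise disjointness of the $A_{\xi,i}$ for fixed $i$ is, as you say, immediate from the elements being functions. The concluding step (cardinality $\omega_1$, weight $\omega_1$, and uniform Eberlein compactness via Theorems 2.1 and 2.2 of \cite{bell}) is handled correctly in your write-up.
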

\begin{proof}
For all distinct $(i_1,\xi_1),...,(i_k,\xi_k), (i, \xi)$, by \ref{densityP}, we have 
$$\{(i,\xi)\}\in A_{i,\xi}\setminus (A_{i_1,\xi_1}\cup...\cup A_{i_k,\xi_k}).$$
So each $A_{\xi,i}$ is nonempty and the condition (c) of Definition \ref{c-algebra} is satisfied. It also follows directly from the fact that $f$'s are functions and that $(i, \xi) \in A_{\xi, i}$ that $(A_{\xi,i})_{\xi<\omega_2}$ are pairwise disjoint and nonempty.
\end{proof}

For each $(i, \xi) \in \omega \times \omega_1$, let $\dot{A}_{\xi,i}$ be a $\mathbb{P}$-name for $A_{\xi,i}$. 

\begin{definition}
Given $p_1 = (n_1, D_1, F_1), p_2 = (n_2, D_2, F_2) \in \mathbb{P}$, we say that they are isomorphic if $n_1=n_2$ and there is an order-preserving bijection $e: D_1 \rightarrow D_2$ such that $e|_{D_1 \cap D_2} = id$ and for all $f \in Fn_{<\omega}(\omega, \omega_1)$, $f \in F_1$ if and only if $e[f] \in F_2$, where $e[f](i) = e(f(i))$. 
\end{definition}

\begin{lemma}\label{minimalAmalgamation}
Let $p_k= (n, D_k, F_k)$ in $\mathbb{P}$, for $1 \leq k \leq m$, be pairwise isomorphic conditions such that $(D_k)_{1 \leq k\leq m}$ is a $\Delta$-system with root $D$. Then, there is $p \in\mathbb{P}$, $p \leq p_1, \dots, p_m$ such that for any distinct $1\leq i,i'\leq n$, any distinct $1\leq k,k'\leq m$ and any $\xi\in D_k\setminus D$ and $\xi'\in D_{k'}\setminus D$ we have 
$$p \Vdash \dot{A}_{\check{\xi},i} \cap \dot{A}_{\check{\xi}',i'}=  \emptyset.$$
\end{lemma}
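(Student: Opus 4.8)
The plan is to build $p=(n,D_p,F_p)$ by taking $D_p=\bigcup_{k=1}^m D_k$, keeping $n_p=n$, and choosing $F_p$ as small as possible subject to the constraints of Definition \ref{definitionP} and the requirement that $p\leq p_1,\dots,p_m$. First I would set $F_p^0=\bigcup_{k=1}^m F_k$, which already satisfies $[n\times D_p]^1\subseteq F_p^0$ since each $F_k$ contains $[n\times D_k]^1$ and $[n\times D_p]^1=\bigcup_k[n\times D_k]^1$. The issue is that $F_p^0$ need not witness $p\leq p_k$: condition (P) demands that every $f\in F_p^0$ restrict (on $n\times D_k$) to some element of $F_k$. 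For $f\in F_{k'}$ with $k'\neq k$, the restriction $G_f\cap(n\times D_k)$ lands in $n\times(D_{k'}\cap D_k)=n\times D$ (as $(D_k)$ is a $\Delta$-system with root $D$), so I need $G_f\cap(n\times D)$ to be the graph of some $g\in F_k$. Here I would use that the $p_k$ are pairwise isomorphic via order-preserving bijections fixing the roots pointwise: the isomorphism $e:D_{k'}\to D_k$ restricts to the identity on $D_{k'}\cap D_k=D$, hence $e[f]\in F_k$ and $G_{e[f]}\cap(n\times D)=G_f\cap(n\times D)$. Thus $g=e[f]\!\restriction\!(n\times D)$ works provided $F_k$ is closed under restrictions — which it need not be. So the correct move is to close off: let $F_p$ be the closure of $F_p^0$ under the operation $f\mapsto f\!\restriction\!S$ for $S\subseteq n\times D_p$, together with all of $[n\times D_p]^1$; this is still finite. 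Simultaneously I must re-close each $F_k$ under restrictions first (replacing $F_k$ by $\overline{F_k}$), observing that this does not affect whether $p_k\in\mathbb P$ (the cardinality and $[n\times D_k]^1$ conditions persist) nor the defined sets $A_{\xi,i}$, and that $p\leq p_k$ becomes immediate once both sides are restriction-closed.

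Next I would verify $p\leq p_k$ for each $k$ using this $F_p$: given $f\in F_p$, by construction $f$ is a restriction of some $h\in F_{k'}$ for some $k'$; then $G_f\cap(n\times D_k)$ is a restriction of $h$ to a subset of $n\times(D_{k'}\cap D_k)\subseteq n\times D$, and by the isomorphism argument above this is the graph of an element of $\overline{F_k}=F_k$. The monotonicity conditions $n_p\geq n$, $D_p\supseteq D_k$, $F_p\supseteq F_k$ are clear.

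Finally, the forcing statement. Fix distinct $i,i'\in\{1,\dots,n\}$ (I read the hypothesis as $1\le i,i'\le n$ meaning $i,i'<n$, matching the index convention elsewhere), distinct $k,k'$, and $\xi\in D_k\setminus D$, $\xi'\in D_{k'}\setminus D$. Suppose toward contradiction some $r\leq p$ forces $f\in\dot A_{\xi,i}\cap\dot A_{\xi',i'}$ for some $f$; refining, I may assume $r$ decides $f$, so $f\in F_r$ with $f(i)=\xi$ and $f(i')=\xi'$, and by (P) applied to $r\leq p$ there is $g\in F_p$ with $G_f\cap(n\times D_p)\subseteq G_g$. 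Since $\xi,\xi'\in D_p$ and $i,i'<n=n_p$, we get $g(i)=\xi$ and $g(i')=\xi'$, so $g$ is a partial function whose graph meets both $n\times D_k$ (at $(i,\xi)$) and $n\times D_{k'}$ (at $(i',\xi')$) outside the root. But by construction every element of $F_p$ is a restriction of some $h\in F_{k_0}$ for a single $k_0$; then $G_g\subseteq G_h\subseteq n\times D_{k_0}$, so $(i,\xi)\in n\times D_{k_0}$ forces $\xi\in D_{k_0}$, and as $\xi\in D_k\setminus D$ this gives $k_0=k$; symmetrically $(i',\xi')\in n\times D_{k_0}$ with $\xi'\in D_{k'}\setminus D$ gives $k_0=k'$, contradicting $k\neq k'$. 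The main obstacle is precisely the bookkeeping around restriction-closure: one must be careful that closing the $F_k$'s and $F_p$ under restrictions is harmless (finiteness, the $[n\times D]^1$ requirement, and invariance of the $\dot A_{\xi,i}$) and that after closing, \emph{every} element of $F_p$ genuinely lies inside $n\times D_{k_0}$ for a single $k_0$ — this is what drives the disjointness and is the only place the $\Delta$-system and isomorphism hypotheses are used in an essential way.
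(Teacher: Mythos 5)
Your construction and the key disjointness mechanism are the same as the paper's: take $D_p=\bigcup_k D_k$, $F_p\supseteq\bigcup_k F_k$, observe that every function in $\bigcup_k F_k$ has its graph inside $n\times D_{k_0}$ for a single $k_0$ (so none can contain both $(i,\xi)$ and $(i',\xi')$ with $\xi\in D_k\setminus D$, $\xi'\in D_{k'}\setminus D$, $k\neq k'$), and use (P) to propagate this to all extensions of $p$. The one place you diverge --- closing the $F_k$'s and $F_p$ under restrictions --- rests on a misreading of condition (P): it asks only that $G_f\cap(n_q\times D_q)$ be \emph{contained in} the graph of some $g\in F_q$, not that this restriction itself \emph{be} an element of $F_q$; so $g=e[f]$ already witnesses (P) (since $G_f\cap(n\times D_k)=G_{e[f]}\cap(n\times D)\subseteq G_{e[f]}$) and no closure is needed. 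Your extra step is harmless --- restrictions of functions with range in $D_{k_0}$ still have range in $D_{k_0}$, so your final argument goes through, and $\overline{p_k}\leq p_k$ gives $p\leq p_k$ by transitivity --- but it is pure overhead, and you should note that the bookkeeping you describe as ``the main obstacle'' disappears entirely once (P) is read correctly.
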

\begin{proof}
We define $p= (n_p, D_p, F_p)$ by letting $n_p = n$, $D_p = D_1 \cup \dots \cup D_m$ and $F_p = F_1 \cup \dots \cup F_m$. Let us prove that $p \in \mathbb{P}$ and that $p \leq p_k$ for every $1 \leq k \leq m$. 

As finite unions of finite sets, $D_p$ and $F_p$ are countable. Notice that $[n \times D_p]^{1} = [n \times D_1]^1 \cup \dots \cup [n \times D_m]^1 \subseteq F_1 \cup \dots \cup F_m =F_p$ and that $F_p \subseteq Fn_{< \omega}(n, D_1) \cup \dots \cup Fn_{< \omega} (n, D_m) \subseteq Fn_{< \omega} (n, D_p)$, so that $p \in \mathbb{P}$.

Fix $1 \leq k \leq m$ and let us now show that $p \leq p_k$. By the definition of $p$, $n_p = n$, $D_p \supseteq D_k$ and $F_p \supseteq F_k$. Given $f \in F_p$, (P) of Definition \ref{definitionP} is trivially satisfied if $f \in F_k$. So, suppose $f \in F_{k'} \setminus F_k$ for some $k'\neq k$ and let $e$ be the order-preserving bijection from $D_{k'}$ onto $D_k$. Since $p_k$ and $p_{k'}$ are isomorphic and $f \in F_{k'}$, we get that $g = e[f] \in F_k$. Let us prove that $G_f \cap (n_k \times D_k) = G_g \cap (n_p \times D) \subseteq G_g$: given $(i,\xi) \in G_f \cap (n_k \times D_k)$, we have that $f(i)=\xi$ and since $f \in F_{k'} \subseteq n_p \times D_{k'}$, we get that $\xi \in D_k \cap D_{k'} = D$. Then $g(i) = e[f](i) = e(f(i)) = e(\xi) = \xi$, since $\xi \in D$. This proves that $G_f \cap (n_k \times D_k) \subseteq G_g$ and concludes the proof that $p \leq p_k$ for all $1 \leq k\leq m$.

Now let $1\leq i, i'\leq n$ be distinct, $1\leq k<k'\leq m$ and let $\xi\in D_k\setminus D$ and $\xi'\in D_{k'}\setminus D$. 
Note that in $F_p$ there is no function $f$ such that $f(i)=\xi$ and $f(i')=\xi'$. Let $G$ be a $\PP$-generic filter such that $p\in G$. Given any $p'\in G$, there is $p''\in G$ such that $p''\leq p,p'$. By (P) of Definition \ref{definitionP}, no function $g\in F_{p''}$ satisfies $g(i)=\xi$ and $g(i')=\xi'$, hence there is no such function in $p'$. It follows from the definition of $A_{\xi,i}$'s that $A_{\xi,i}\cap A_{\xi',i'}=\emptyset$ in $V[G]$. Since $G$ was an arbitrary $\mathbb{P}$-generic filter containing $p$, we conclude that $p \Vdash \dot{A}_{\check{\xi},i} \cap \dot{A}_{\check{\xi}',i'}=  \emptyset$.
\end{proof}

\begin{theorem}\label{cccP}
$\mathbb{P}$ has precaliber $\omega_1$ and hence satisfies the c.c.c.
\end{theorem}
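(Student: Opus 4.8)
The plan is to establish precaliber $\omega_1$ directly; the c.c.c.\ then follows at once, since an uncountable antichain $A\subseteq\mathbb{P}$ would contain an uncountable subset all of whose finite (in particular two-element) subsets have lower bounds, contradicting that $A$ is an antichain. So fix an uncountable family $\{p_\alpha:\alpha<\omega_1\}\subseteq\mathbb{P}$, say $p_\alpha=(n_\alpha,D_\alpha,F_\alpha)$. The goal is to produce an uncountable $S\subseteq\omega_1$ such that every $\{p_\alpha:\alpha\in S_0\}$ with $S_0\in[S]^{<\omega}$ has a common lower bound in $\mathbb{P}$. The strategy is to thin out $\{p_\alpha\}$ to an uncountable, pairwise isomorphic $\Delta$-system and then apply Lemma~\ref{minimalAmalgamation}.

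First I would apply the $\Delta$-system lemma for finite sets (which requires no cardinal arithmetic): there are an uncountable $S_1\subseteq\omega_1$ and a finite set $D$ such that $(D_\alpha)_{\alpha\in S_1}$ is a $\Delta$-system with root $D$; refining further (there are only countably many possible values, so some value is attained uncountably often) I may assume that $|D_\alpha|=d$ and $n_\alpha=n$ are constant for $\alpha\in S_1$. For each $\alpha\in S_1$ let $\pi_\alpha:D_\alpha\to\{0,\dots,d-1\}$ be the unique order isomorphism, and for $f\in Fn_{<\omega}(\omega,\omega_1)$ with range contained in $D_\alpha$ write $\pi_\alpha[f]$ for the function $i\mapsto\pi_\alpha(f(i))$, in the notation of the definition preceding Lemma~\ref{minimalAmalgamation}. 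Now $\pi_\alpha[D]$ ranges over the finite collection of $d$-element... subsets of $\{0,\dots,d-1\}$ of size $|D|$, and $\pi_\alpha[F_\alpha]:=\{\pi_\alpha[f]:f\in F_\alpha\}$ ranges over the finite set $\wp\big(Fn_{<\omega}(n,\{0,\dots,d-1\})\big)$; hence by pigeonhole there is an uncountable $S\subseteq S_1$ on which both $\pi_\alpha[D]$ and $\pi_\alpha[F_\alpha]$ are constant, equal to some $T$ and some $\mathcal{F}$ respectively.

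It remains to check that $\{p_\alpha:\alpha\in S\}$ is pairwise isomorphic. Given $\alpha,\beta\in S$, the map $e=\pi_\beta^{-1}\circ\pi_\alpha:D_\alpha\to D_\beta$ is an order-preserving bijection with $n_\alpha=n_\beta=n$. Since $\pi_\alpha[D]=\pi_\beta[D]=T$ and $\pi_\alpha\!\restriction\! D$, $\pi_\beta\!\restriction\! D$ are the unique order isomorphisms of $D$ onto $T$, they coincide after passing to $e$, so $e\!\restriction\!(D_\alpha\cap D_\beta)=e\!\restriction\! D=\mathrm{id}$ (recall $D_\alpha\cap D_\beta=D$ by the $\Delta$-system property). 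Moreover $\pi_\alpha[F_\alpha]=\pi_\beta[F_\beta]=\mathcal{F}$ forces $f\in F_\alpha\iff e[f]\in F_\beta$. Thus $p_\alpha$ and $p_\beta$ are isomorphic, and $\{p_\alpha:\alpha\in S\}$ is an uncountable family of pairwise isomorphic conditions whose domains form a $\Delta$-system with root $D$. By Lemma~\ref{minimalAmalgamation} every finite subfamily has a common lower bound (even with the additional disjointness clause, which is not needed here), which is precisely precaliber $\omega_1$; the c.c.c.\ follows as noted.

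The only genuinely delicate point is the bookkeeping in the last two paragraphs: arranging that the canonical order identifications $\pi_\alpha$ can be chosen to agree on the root and on the $F$-parts so as to yield isomorphisms of conditions in the exact sense of the definition preceding Lemma~\ref{minimalAmalgamation}. Everything else is a routine combination of the $\Delta$-system lemma and the pigeonhole principle.
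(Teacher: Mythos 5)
Your proof is correct and follows essentially the same route as the paper's: thin the family to a pairwise isomorphic $\Delta$-system via the $\Delta$-system lemma and counting arguments, then amalgamate finite subfamilies using Lemma~\ref{minimalAmalgamation}. The only difference is that you spell out the ``further counting arguments'' (the pigeonhole on the order-type data $\pi_\alpha[D]$ and $\pi_\alpha[F_\alpha]$) that the paper leaves implicit, and this bookkeeping is carried out correctly.
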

\begin{proof}
Let $(p_\alpha)_{\alpha<\omega_1} \subseteq \mathbb{P}$, where each $p_\alpha = (n_\alpha, D_\alpha, F_\alpha)$. By the $\Delta$-system lemma, we may assume that $(D_\alpha)_{\alpha<\omega_1}$ is a $\Delta$-system with root $D$. By standard counting arguments, we may assume without loss of generality that $n_\alpha = n$ for every $\alpha<\omega_1$ and some fixed $n \in \omega$. By thinning out using further counting arguments, we can assume that for every $\alpha<\beta<\omega_1$, $p_\alpha$ and $p_\beta$ are isomorphic. Now, given $\alpha_1<... < \alpha_n< \omega_1$, by Lemma \ref{minimalAmalgamation} there is $p \leq p_{\alpha_1},... p_{\alpha_n}$.
\end{proof}

\begin{lemma}\label{strongAmalgamation}
Let $p_k= (n, D_k, F_k)$ in $\mathbb{P}$, for $1 \leq k \leq m$, be pairwise isomorphic conditions such that $(D_k)_{1 \leq k\leq m}$ is a $\Delta$-system with root $D$. Given, for all $1 \leq k \leq m$, $\xi_k \in D_k \setminus D$ and distinct $i_k < n$, there is $p \in\mathbb{P}$, $p \leq p_1, \dots, p_m$ such that
$$p \Vdash \dot{A}_{\check{\xi}_1,i_1} \cap \dots \cap \dot{A}_{\check{\xi}_m,i_m} \neq \emptyset.$$
\end{lemma}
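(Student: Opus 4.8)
The plan is to build the amalgamated condition essentially as in Lemma~\ref{minimalAmalgamation}, but to throw one extra partial function into the finite set $F_p$, namely the function that realizes all the prescribed values at once. Concretely, let $f\colon\{i_1,\dots,i_m\}\to\omega_1$ be given by $f(i_k)=\xi_k$; since the $i_k$ are pairwise distinct and each $\xi_k\in D_k\subseteq D_p:=D_1\cup\dots\cup D_m$, this is a well-defined element of $Fn_{<\omega}(n,D_p)$. I would then set $p=(n_p,D_p,F_p)$ with $n_p=n$, $D_p$ as above and $F_p=F_1\cup\dots\cup F_m\cup\{f\}$. Checking $p\in\mathbb{P}$ is routine: finiteness is clear, $F_p\subseteq Fn_{<\omega}(n,D_p)$ since each $F_k\subseteq Fn_{<\omega}(n,D_k)$ and $f\in Fn_{<\omega}(n,D_p)$, and $[n\times D_p]^1\subseteq F_1\cup\dots\cup F_m$ exactly as in the proof of Lemma~\ref{minimalAmalgamation}.

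Next I would verify $p\le p_k$ for each $1\le k\le m$. The inclusions $n_p=n$, $D_p\supseteq D_k$ and $F_p\supseteq F_k$ are immediate, so only condition (P) of Definition~\ref{definitionP} needs checking. For $f'\in F_k$ we take $g=f'$, and for $f'\in F_{k'}\setminus F_k$ with $k'\neq k$ the verification is word for word that of Lemma~\ref{minimalAmalgamation}, using the order-preserving bijection $e\colon D_{k'}\to D_k$ fixing $D$ and $g=e[f']\in F_k$. The one genuinely new case is (P) applied to the extra function $f$. Here the $\Delta$-system structure is what makes it work: if $k'\neq k$ then $\xi_{k'}\in D_{k'}\setminus D$ while $D_{k'}\cap D_k=D$, so $\xi_{k'}\notin D_k$; consequently $G_f\cap(n\times D_k)=\{(i_k,\xi_k)\}$. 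Since $(i_k,\xi_k)\in n\times D_k$, the one-element function $g=\{(i_k,\xi_k)\}$ lies in $[n\times D_k]^1\subseteq F_k$ and satisfies $G_f\cap(n\times D_k)\subseteq G_g$, so (P) holds and $p\le p_k$.

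Finally I would establish the forcing statement. Let $G$ be any $\mathbb{P}$-generic filter with $p\in G$. In $V[G]$ we have $f\in F_p$, $f(i_k)=\xi_k$ for every $k$, and $p\in G$, so the definition of $A_{\xi_k,i_k}$ gives $f\in A_{\xi_k,i_k}$ for each $k$; hence $f\in A_{\xi_1,i_1}\cap\dots\cap A_{\xi_m,i_m}$, which is therefore nonempty in $V[G]$. Since $G$ was an arbitrary $\mathbb{P}$-generic filter containing $p$, we conclude $p\Vdash\dot{A}_{\check{\xi}_1,i_1}\cap\dots\cap\dot{A}_{\check{\xi}_m,i_m}\neq\emptyset$.

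The only delicate point — the \emph{main obstacle}, though it is mild — is confirming that adjoining the single function $f$ to $F_p$ does not destroy (P) relative to any $p_k$. This hinges precisely on the fact that the values $\xi_{k'}$ for $k'\neq k$ lie outside $D_k$, which is exactly the $\Delta$-system hypothesis; it forces the trace $G_f\cap(n\times D_k)$ to collapse to the single pair $(i_k,\xi_k)$, which $F_k$ already covers via a one-element function. Everything else is a direct reuse of the bookkeeping already carried out in Lemma~\ref{minimalAmalgamation}.
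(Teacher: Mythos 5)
Your proposal is correct and follows essentially the same route as the paper's own proof: the same amalgamated condition $F_p=F_1\cup\dots\cup F_m\cup\{f_0\}$ with $f_0=\{(i_1,\xi_1),\dots,(i_m,\xi_m)\}$, the same three-case verification of (P), and the same observation that the $\Delta$-system structure collapses $G_{f_0}\cap(n\times D_k)$ to $\{(i_k,\xi_k)\}$, which is covered by a singleton function in $[n\times D_k]^1\subseteq F_k$. If anything, your write-up is slightly more explicit than the paper's at that last step.
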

\begin{proof}
We define $p= (n_p, D_p, F_p)$ by letting $n_p = n$, $D_p = D_1 \cup \dots \cup D_m$ and 
$$F_p = F_1 \cup \dots \cup F_m \cup \{f_0\},$$
where $f_0= \{(i_1,\xi_1), \dots, (i_m, \xi_m)\}$. Let us check that $p \in \mathbb{P}$ and that $p \leq p_k$ for every $1 \leq k \leq m$. 

As finite unions of finite sets, $D_p$ and $F_p$ are finite. Notice that $[n \times D_p]^1 = [n \times D_1]^1 \cup \dots \cup [n \times D_m]^1 \subseteq F_1 \cup \dots \cup F_m \subseteq F_p$ and that $F_p \setminus \{f_0\} \subseteq Fn_{< \omega}(n, D_1) \cup \dots \cup Fn_{< \omega} (n, D_m) \subseteq Fn_{< \omega} (n, D_p)$. Since $f_0 \in Fn_{< \omega}(n, D_1 \cup \dots \cup D_m) \subseteq Fn_{< \omega} (n, D_p)$, we get that $p \in \mathbb{P}$.

Fix $1 \leq k \leq m$ and let us now verify that $p \leq p_k$. By the definition of $p$, $n_p = n$, $D_p \supseteq D_k$ and $F_p \supseteq F_k$. To check (P) of Definition \ref{definitionP}, given $f \in F_p$, let us consider three cases:

\textbf{Case 1.} If $f \in F_k$, then (P) is trivially satisfied. 

\textbf{Case 2.} If $f \in F_{k'} \setminus F_k$ for some $k'\neq k$, let $e$ be the order-preserving bijection from $D_{k'}$ onto $D_k$. Since $p_k$ and $p_{k'}$ are isomorphic and $f \in F_{k'}$, we get that $g = e[f] \in F_k$. Let us prove that $G_f \cap (n_k \times D_k) = G_g \cap (n_p \times D) \subseteq G_g$: given $(i,\xi) \in G_f \cap (n_k \times D_k)$, we have that $f(i)=\xi$ and since $f \in F_{k'} \subseteq n_p \times D_{k'}$, we get that $\xi \in D_k \cap D_{k'} = D$. Then $g(i) = e[f](i) = e(f(i)) = e(\xi) = \xi$, since $\xi \in D$. This proves that $G_f \cap (n_k \times D_k) \subseteq G_g$, as we wanted.

\textbf{Case 3.} If $f = f_0 = \{(i_1,\xi_1), \dots, (i_m, \xi_m)\}$, then $G_f \cap (n_k \times D_k) = \{(i_k, \xi_k)\} \subseteq F_k$ by Definition \ref{definitionP}.

Finally, since $f_0 \in F_p$, $p$ forces that $\check{f}_0 \in \dot{A}_{\check{\xi}_1,i_1} \cap \dots \cap \dot{A}_{\check{\xi}_m,i_m}$, so that 
$$p \Vdash \dot{A}_{\check{\xi}_1,i_1} \cap \dots \cap \dot{A}_{\check{\xi}_m,i_m} \neq \emptyset,$$
which concludes the proof.
\end{proof}

\begin{definition}
$\Sigma$ is the product of $\omega_2$ copies of $\mathbb{P}$, with finite supports, that is,
$$\Sigma = \{\sigma: dom(\sigma) \rightarrow \mathbb{P}: dom(\sigma) \in [\omega_2]^{< \omega}\},$$
ordered by $\sigma_1 \leq \sigma_2$ if $dom(\sigma_1) \supseteq dom(\sigma_2)$ and for every $\alpha \in dom(\sigma_2)$, $\sigma_1(\alpha) \leq \sigma_2(\alpha)$. Given $A \subseteq \omega_2$, let 
$$\Sigma_A = \{\sigma \in \Sigma: dom(\sigma) \subseteq A\}.$$
\end{definition}

\begin{proposition}\label{isomorphismP}
Given $\gamma_0 \in \omega_2$, if $A = \omega_2 \setminus \{\gamma_0\}$, then the forcing $\Sigma_A$ is isomorphic to $\Sigma$.
\end{proposition}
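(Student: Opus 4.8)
The plan is to observe that $\Sigma$ is, by definition, the finite support product of copies of $\PP$ indexed by the set $\omega_2$, while $\Sigma_A$ is the finite support product of copies of $\PP$ indexed by the set $A=\omega_2\setminus\{\gamma_0\}$. The ordering of $\Sigma$ (and of $\Sigma_A$) refers to the index set only \emph{as a set} --- through the inclusion of domains and the coordinatewise comparison in $\PP$ --- and not to its well-order. Hence any bijection between the two index sets lifts to an isomorphism of partial orders, and since $|A|=|\omega_2\setminus\{\gamma_0\}|=\omega_2$, such a bijection exists.

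Concretely, I would fix a bijection $\pi\colon\omega_2\to A$ and define $\Phi\colon\Sigma\to\Sigma_A$ by setting, for $\sigma\in\Sigma$, $\dom(\Phi(\sigma))=\pi[\dom(\sigma)]$ and $\Phi(\sigma)(\pi(\alpha))=\sigma(\alpha)$ for every $\alpha\in\dom(\sigma)$. Since $\dom(\sigma)$ is a finite subset of $\omega_2$, $\pi[\dom(\sigma)]$ is a finite subset of $A$, so indeed $\Phi(\sigma)\in\Sigma_A$. The assignment $\tau\mapsto\tau\circ\pi$, with domain $\pi^{-1}[\dom(\tau)]$, is a two-sided inverse of $\Phi$, so $\Phi$ is a bijection of the underlying sets.

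Finally I would check that $\Phi$ and $\Phi^{-1}$ preserve the ordering. Suppose $\sigma_1\le\sigma_2$ in $\Sigma$, so that $\dom(\sigma_1)\supseteq\dom(\sigma_2)$ and $\sigma_1(\alpha)\le\sigma_2(\alpha)$ for every $\alpha\in\dom(\sigma_2)$. Applying the bijection $\pi$ gives $\dom(\Phi(\sigma_1))=\pi[\dom(\sigma_1)]\supseteq\pi[\dom(\sigma_2)]=\dom(\Phi(\sigma_2))$, and for every $\beta=\pi(\alpha)\in\dom(\Phi(\sigma_2))$ we have $\Phi(\sigma_1)(\beta)=\sigma_1(\alpha)\le\sigma_2(\alpha)=\Phi(\sigma_2)(\beta)$; hence $\Phi(\sigma_1)\le\Phi(\sigma_2)$ in $\Sigma_A$. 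The converse implication is symmetric, using $\pi^{-1}$ in place of $\pi$. Thus $\Phi$ is an isomorphism of the forcing notions $\Sigma$ and $\Sigma_A$. There is essentially no obstacle in this argument; the only point requiring a moment's care is the observation that the ordering of $\Sigma$ uses $\omega_2$ merely as an index set, so that a non-monotone bijection is perfectly acceptable and nothing beyond the equality of cardinalities is needed.
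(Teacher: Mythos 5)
Your proof is correct and is exactly the paper's argument, which the authors compress into the single line ``lift to $\mathbb{P}$ a bijection between $\omega_2$ and $\omega_2\setminus\{\gamma_0\}$''; you have simply written out the lifting and the verification that it preserves the order in both directions. Nothing further is needed.
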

\begin{proof}
Lift to $\PP$ a bijection between $\omega_2$ and $\omega_2\setminus\{\gamma_0\}$.
\end{proof}

\begin{theorem}
$V^{\Sigma} \vDash$ ``there is no Banach space $X$ of density $\omega_1<\mathfrak{c}$ such that for every uniform Eberlein compact space $K$ of weight at most $\omega_1$, $C(K)$ can be isomorphically embedded into $X$''.
\end{theorem}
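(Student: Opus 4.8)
The plan is to argue by contradiction: assume that in $V^\Sigma$ there is a Banach space $X$ of density $\omega_1$ into which $C(K)$ embeds for every uniform Eberlein compact $K$ of weight $\leq\omega_1$, and produce one such $K$ for which this fails --- the Stone space of the generic $c$-algebra added by a coordinate of $\Sigma$ chosen ``outside'' $X$. First the soft facts. Since $\mathbb{P}$ has precaliber $\omega_1$ (Theorem \ref{cccP}), so does its finite-support product $\Sigma$, by a $\Delta$-system argument on supports; hence $\Sigma$ is c.c.c.\ and preserves cardinals, and, being a finite-support product of infinitely many atomless posets, it adds Cohen reals, so $V^\Sigma\models\omega_1<\mathfrak{c}$. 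For each $\gamma<\omega_2$, the $\gamma$-th coordinate adds, exactly as in Proposition \ref{UEP}, a $c$-algebra $\mathcal{B}_\gamma$ of cardinality $\omega_1$ whose Stone space $K_\gamma$ is a uniform Eberlein compact of weight $\omega_1$; so each $C(K_\gamma)$ must embed into $X$.

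Next comes a capture argument. Because $\Sigma$ is c.c.c.\ and $\dot X$ is forced to have density $\omega_1$, there is $A\subseteq\omega_2$ of size $\leq\omega_1$ such that, up to a forced isometry, $\dot X$ is a $\Sigma_A$-name: fix names $\{\dot d_\alpha:\alpha<\omega_1\}$ for a dense set, replace each of the $\omega_1$ names for the norm of a finite rational combination of the $\dot d_\alpha$ by a nice name decided by countably many countable antichains, and let $A$ be the union of the (finite) supports of all conditions occurring. Pick $\gamma_0\in\omega_2\setminus A$. Since $X$ is assumed universal there is a $\Sigma$-name $\dot T$ forced to be an isomorphic embedding of $C(K_{\gamma_0})$ into $\dot X$; capturing $\dot T$ in the same way and adjoining $A$ and $\gamma_0$, we obtain $B\subseteq\omega_2$ of size $\leq\omega_1$ with $A\cup\{\gamma_0\}\subseteq B$ such that both $\dot X$ and $\dot T$ are $\Sigma_B$-names.

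Now factor $\Sigma_B\cong\Sigma_{B\setminus\{\gamma_0\}}\times\mathbb{P}$ and set $W=V^{\Sigma_{B\setminus\{\gamma_0\}}}$. As $A\subseteq B\setminus\{\gamma_0\}$, the space $X$ lives in $W$ with density $\omega_1$, and $V^{\Sigma_B}=W[G]$ for a $\mathbb{P}$-generic $G$ over $W$ whose associated $c$-algebra is $\mathcal{B}_{\gamma_0}$; since the Banach space $C(L)$ of the Stone space $L$ of a Boolean algebra depends only on the algebra, $C(K_{\gamma_0})$ is, in $W[G]$, exactly the space $C(K)$ built from the $\mathbb{P}$-generic $c$-algebra over $W$. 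As the statement ``$T$ is an isomorphic embedding of $C(K_{\gamma_0})$ into $X$'' refers only to these fixed objects and their elements, it is absolute between $V^{\Sigma_B}$ and $V^\Sigma$, hence holds in $V^{\Sigma_B}=W[G]$. Everything therefore reduces to the following statement, which is a theorem of ZFC about the absolutely definable forcing $\mathbb{P}$ and hence may be applied inside $W$: \emph{if $X$ is any Banach space of density $\omega_1$, then $\mathbb{P}$ forces that $C(K)$ does not isomorphically embed into $X$}, where $K$ is the Stone space of the generic $c$-algebra. Granting this, in $W$ the forcing $\mathbb{P}$ forces that $C(K)$ does not embed into $X$, so in particular there is no such embedding in $W[G]=V^{\Sigma_B}$, contradicting that $\dot T$ witnesses one there.

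It remains to prove the lemma, which is the only nontrivial part. I would assume $p_0\Vdash$``$\dot T:C(K)\to X$ is linear, $\|\dot T\|\leq 1$, and $\|\dot T f\|\geq c\|f\|$ for all $f$'' for some $c>0$, and seek a contradiction after fixing $\varepsilon<c/3$. Using c.c.c., the value of $\dot T$ on each canonical generator $\chi_{[A_{\xi,i}]}$ of $C(K)$ is, below a dense set of conditions, forced to within $\varepsilon$ of some member of a fixed dense set $\{x_\eta:\eta<\omega_1\}$ of $X$, and a countable antichain predecides, generator by generator, which approximating value occurs; thus each $\xi<\omega_1$ carries a finite amount of ``$\dot T$-information'' living in the ground model. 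A $\Delta$-system and isomorphism thinning as in the proof of Theorem \ref{cccP} selects uncountably many mutually isomorphic conditions whose $\dot T$-information is coherent, and then the amalgamation lemmas (\ref{minimalAmalgamation} and \ref{strongAmalgamation}) are used in tandem below a common refinement: strong amalgamation forces prescribed finite intersections of the generic $A_{\xi,i}$'s --- spread across several of the antichains $\mathcal{A}_i$ --- to be nonempty while minimal amalgamation forces others to be empty, and, because the pinned-down values were chosen in the ground model and cannot anticipate which amalgamation pattern is realized, one arranges their destructive interference along the chosen pattern so as to force some unit vector $f\in C(K)$, built from the relevant $\chi_{[A_{\xi,i}]}$'s, to satisfy $\|\dot T f\|=O(\varepsilon)$; with $\varepsilon<c/3$ this contradicts $\|\dot T f\|\geq c\|f\|=c$. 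The main obstacle is to turn ``destructive interference'' into a genuine oscillation estimate --- to pin down the right test elements of $C(K)$ and to verify that no ground-model operator can distinguish the two amalgamations; the capture and factorization steps above are routine.
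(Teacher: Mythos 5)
Your overall strategy---reserve a coordinate $\gamma_0$ whose generic $c$-algebra is ``unseen'' by $\dot X$, then play the two amalgamation lemmas against each other---is the paper's strategy. But two steps you dismiss as routine or leave as an ``obstacle'' are exactly where the work is, and the first of them, as you state it, is false.

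The factorization/absoluteness step does not go through. You claim that $\dot X$ and $\dot T$ can be captured as $\Sigma_B$-names for some $B$ of size $\omega_1$ and that ``$T$ is an isomorphic embedding of $C(K_{\gamma_0})$ into $X$'' is absolute between $V^{\Sigma_B}$ and $V^{\Sigma}$. Neither object is absolute: $X$ is a \emph{complete} space, and its completion computed in $V^{\Sigma_B}$ is a proper subset of the one computed in $V^{\Sigma}$ (the full product keeps adding reals, hence new limit points); likewise $C(K_{\gamma_0})$ acquires new elements in $V^{\Sigma}$ beyond those present in $V^{\Sigma_B}$, and the restriction of $T$ to the smaller domain need not map into the smaller codomain. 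The authors flag precisely this in a footnote: $\dot X$ ``will not belong to any intermediate model,'' and a factorization approach would force one to work with non-complete rational-normed spaces and possibly nonlinear approximations of $T$. Their workaround is to \emph{not} factor: they stay in $V$ with the full $\Sigma$, fix names $(\dot v_\eta)_{\eta<\omega_1}$ for a dense subset of $\dot X$, and capture only the $\omega_1$-many maximal antichains $A_F$ deciding $\Vert\sum_{\eta\in F}\dot v_\eta\Vert>2$ versus $\leq 2$; $\gamma_0$ is chosen to avoid the supports occurring in these antichains, and that is the only ``independence from $\gamma_0$'' ever used. Your reduction to a clean ZFC lemma about $\mathbb{P}$ over a ground model containing $X$ therefore does not exist in the form you need it.

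Second, the core quantitative argument is missing, and you say so yourself (``the main obstacle is to turn destructive interference into a genuine oscillation estimate''). The paper's contradiction is not an $O(\varepsilon)$ cancellation on a single unit vector. One fixes $m$ with $3\Vert\dot T^{-1}\Vert<m$, finds for each $\alpha$ a condition $\sigma_\alpha$ forcing $\Vert\dot T(\chi_{[\dot A_{\xi_k(\alpha),k}]})-\dot v_{\eta_k(\alpha)}\Vert<1/m$ for $k\leq m$, thins to a $\Delta$-system of isomorphic conditions, and takes $m$ of them. The test vector is $f=\sum_{k=1}^m\chi_{[\dot A_{\xi_k(\alpha_k),i_k}]}$: Lemma \ref{minimalAmalgamation} forces $\Vert f\Vert=1$ (pairwise disjoint supports), Lemma \ref{strongAmalgamation} forces $\Vert f\Vert=m$ (nonempty common intersection), and the antichain $A_F$ for $F=\{\eta_1(\alpha_1),\dots,\eta_m(\alpha_m)\}$, living away from coordinate $\gamma_0$, has already decided $\Vert\sum_k\dot v_{\eta_k(\alpha_k)}\Vert\leq 2$ or $>2$ compatibly with \emph{either} amalgamation. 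In the first case strong amalgamation gives $m\leq\Vert\dot T^{-1}\Vert\cdot\Vert\dot T f\Vert\leq\Vert\dot T^{-1}\Vert(2+1)<m$; in the second, minimal amalgamation gives $\Vert\sum_k\dot v_{\eta_k(\alpha_k)}\Vert\leq\Vert f\Vert+1=2$. Without this explicit dichotomy (the choice of the threshold $2$, the factor $m$, and the $1/m$-approximation) your sketch does not yet constitute a proof.
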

\begin{proof}

We work in $V$. 
By contradiction, suppose that such a Banach space exists in $V^\Sigma$ and let $\dot{X}$ be a $\Sigma$-name for it. Since $\Sigma$ forces that $\dot{X}$ has density $\omega_1$, let $(\dot{v}_\eta)_{\eta <\omega_1}$ be a family of $\Sigma$-names such that $\Sigma \Vdash \{\dot{v}_\eta: \eta < \omega_1\}$ is a dense 
subset\footnote{Unfortunately, the Banach space $\dot X$ will not belong to any intermediate model, so
we cannot make use of any factorization of $\Sigma$ unless we are willing to deal with
normed non-complete spaces over the rationals and approximations (possibly nonlinear)
of linear operators into its completion. We have choosen a simpler approach, in our opinion, 
and work in $V$ with the full product $\Sigma$.} of $\dot{X}$. 

For each $F \in [\omega_1]^{<\omega}$, let $A_F \subseteq \Sigma$ be a maximal antichain in $\Sigma$ such that for every $\sigma \in A_F$, 
$$\text{either }\sigma \Vdash \Vert \sum_{\eta\in F}\dot{v}_{\eta} \Vert > 2 \quad \text{ or }\sigma \Vdash \Vert \sum_{\eta\in F}\dot{v}_{\eta} \Vert \leq 2.$$
Since $\Sigma$ is ccc, each $A_F$ is countable and for each $\sigma \in A_F$, $|dom(\sigma)| < \omega$. Then, the set
$$\bigcup \{dom(\sigma): \sigma \in A_F \text{ for some } F \in [\omega_1]^{<\omega} \}$$
has cardinality at most $\omega_1$. Fix $\gamma_0 \in \omega_2$ such that for every $F \in [\omega_1]^{<\omega}$ and every $\sigma \in A_F$, $\gamma_0 \notin dom(\sigma)$.

Since $\Sigma \sim \Sigma_A \times \Sigma_{\{\gamma_0\}} \sim \Sigma_A \times \mathbb{P}$, let us consider in $V^{\Sigma_A \times \Sigma_{\{\gamma_0\}}}$, the Stone space $K_{\gamma_0}$ of the algebra generated by the family $(A_{\xi,i}(\gamma_0))_{\xi\in \omega_1, i \in \omega}$ added by $\Sigma_{\{\gamma_0\}} \sim \mathbb{P}$ over $V^{\Sigma_A}$. By Proposition \ref{UEP}, $K_{\gamma_0}$ is a uniform Eberlein compact of weight $\omega_1$ and let us show that $C(K_{\gamma_0})$ does not embed isomorphically into $\dot{X}$. For each $(i, \xi) \in \omega \times \omega_1$, let $\dot{A}_{\xi, i}$ be a $\Sigma$-name for $A_{\xi,i}(\gamma_0)$.

Suppose there is $\dot{T}: C(K_{\gamma_0}) \rightarrow \dot{X}$ an isomorphic embedding and without loss of generality, assume that $\Sigma \Vdash \Vert \dot{T} \Vert = 1$. Let $p_0 \in \Sigma$, now find
$p_1\leq p_0$ and  $m \in \omega$ such that $p_1 \Vdash 3\cdot \Vert \dot{T}^{-1} \Vert <\check m$.

For each $\alpha< \omega_1$, let $\{\xi_k(\alpha): 1 \leq k \leq m\} \subseteq \omega_1 \setminus \alpha$ 
have cardinality $m$ and take $\sigma_\alpha \in \Sigma$ 
with $\sigma_\alpha\leq p_1$ and $\eta_1(\alpha), \dots \eta_m(\alpha) \in \omega_1$ such that 
$$\sigma_\alpha \Vdash \forall 1 \leq k \leq m \quad \Vert \dot{T}(\chi_{\dot{A}_{\check{\xi}_k(\alpha), k}(\gamma_0)}) - \dot{v}_{\check{\eta}_k(\alpha)}\Vert < \frac{1}{m}.$$
Let $s_\alpha = \sigma_\alpha|_{\omega_2 \setminus \{\gamma_0\}} \in \Sigma_{\omega_2 \setminus \{\gamma_0\}}$ and $\sigma_\alpha(\gamma_0) = (n_\alpha, D_\alpha, F_\alpha)$. Without loss of generality, we may assume that $\xi_1(\alpha), \dots , \xi_m(\alpha) \in D_\alpha$. Using the $\Delta$-system lemma, we may assume that $(D_\alpha)_{\alpha<\omega_1}$ is a $\Delta$-system with root $D$ and that $\xi_1(\alpha), \dots , \xi_m(\alpha) \in D_\alpha \setminus D$ for each $\alpha<\omega_1$
and for all $\alpha<\omega_1$ the conditions $\sigma_\alpha(\gamma_0)$ are pairwise
isomorphic.

By Proposition \ref{isomorphismP}, $\Sigma_{\omega_2 \setminus \{\gamma_0\}}$ is isomorphic to $\Sigma$ and since, by the productivity of precaliber $\omega_1$ and
 Lemma \ref{cccP} $\Sigma$ has precaliber $\omega_1$, so does $\Sigma_{\omega_2 \setminus \{\gamma_0\}}$. So, given $\alpha_1< \dots < \alpha_m$, there is $s \in \Sigma_{\omega_2 \setminus \{\gamma_0\}}$ such that $s \leq s_{\alpha_1}, \dots, s_{\alpha_m}$.

Let $F = \{ \eta_1(\alpha_1), \dots, \eta_m(\alpha_m) \}$ and since $A_F$ is a maximal antichain in $\Sigma$ which is contained in $\Sigma_{\omega_2 \setminus \{\gamma_0\}}$, $A_F$ is also a maximal antichain in $\Sigma_{\omega_2 \setminus \{\gamma_0\}}$. Then, there is $s' \in A_F$ and $s'' \in \Sigma_{\omega_2 \setminus \{\gamma_0\}}$ such that $s'' \leq s, s'$. 

Since $s''\leq s'$ and $s'\in A_F$, either
$$s'' \Vdash \Vert \dot{v}_{\check{\eta}_1(\alpha_1)} + \dots + \dot{v}_{\check{\eta}_m(\alpha_m)} \Vert \leq 2$$
or 
$$s'' \Vdash \Vert \dot{v}_{\check{\eta}_1(\alpha_1)} + \dots + \dot{v}_{\check{\eta}_m(\alpha_m)} \Vert > 2.$$
Let us now get a contradiction.

\textbf{Case 1.} $s''$ forces that $\Vert \dot{v}_{\check{\eta}_1(\alpha_1)} + \dots + \dot{v}_{\check{\eta}_m(\alpha_m)} \Vert > 2$.

In this case, let $p \in \mathbb{P}$ be such that $p \leq \sigma_{\alpha_1}(\gamma_0), \dots, \sigma_{\alpha_m}(\gamma_0)$ obtained by Lemma \ref{minimalAmalgamation} and let $\sigma = (s'', p) \in \Sigma$. Then, $\sigma \leq s'', p$ and $\sigma \leq \sigma_{\alpha_1}, \dots, \sigma_{\alpha_m}$. So,
$$\sigma \Vdash \forall 1 \leq k < k'\leq m \quad \dot{A}_{\check{\xi}_k(\alpha_k),i_k}(\gamma_0)\cap \dot{A}_{\check{\xi}_{k'}(\alpha_{k'}),i_{k'}}(\gamma_0) = \emptyset$$
so that 
$$\sigma \Vdash \Vert \chi_{[\dot{A}_{\check{\xi}_1(\alpha_1),i_1}(\gamma_0)]} + \dots + \chi_{[\dot{A}_{\check{\xi}_m(\alpha_m),i_m}(\gamma_0)]} \Vert = 1.$$
Since $\sigma \leq \sigma_{\alpha_1}, \dots, \sigma_{\alpha_m}$, 
$$\sigma \Vdash \forall 1 \leq k \leq m \quad \Vert \dot{T}(\chi_{[\dot{A}_{\check{\xi}_k(\alpha_k),i_k}(\gamma_0)]}) - \dot{v}_{\check{\eta}_k(\alpha_k)} \Vert < \frac{1}{m},$$
so that, using the fact that $\Sigma \Vdash \Vert \dot{T} \Vert =1$, we get that 
$$\begin{array}{ll}
\sigma \Vdash & \Vert \dot{v}_{\check{\eta}_1(\alpha_1)} + \dots + \dot{v}_{\check{\eta}_m(\alpha_m)} \Vert\\
& \leq \Vert \displaystyle\sum_{k=1}^{m}  (\dot{v}_{\check{\eta}_k(\alpha_k)}-\dot{T}(\chi_{[\dot{A}_{\check{\xi}_k(\alpha_k),i_k}(\gamma_0)]})) +\displaystyle\sum_{k=1}^{m} \dot{T}(\chi_{[\dot{A}_{\check{\xi}_k(\alpha_k),i_k}(\gamma_0)]})\Vert\\
& \leq\Vert \chi_{[\dot{A}_{\check{\xi}_1(\alpha_1),i_1}(\gamma_0)]} + \dots + \chi_{[\dot{A}_{\check{\xi}_m(\alpha_m),i_m}(\gamma_0)]} \Vert + m \cdot \frac{1}{m} \leq 2,
\end{array}$$
contradicting the fact that $\sigma \leq s''$ and 
$$s''\Vdash \Vert \dot{v}_{\check{\eta}_1(\alpha_1)} + \dots + \dot{v}_{\check{\eta}_m(\alpha_m)} \Vert > 2.$$

\textbf{Case 2.} $s''$ forces that $\Vert \dot{v}_{\check{\eta}_1(\alpha_1)} + \dots + \dot{v}_{\check{\eta}_m(\alpha_m)} \Vert \leq 2$.

In this case, let $p \in \mathbb{P}$ be such that $p \leq \sigma_{\alpha_1}(\gamma_0), \dots, \sigma_{\alpha_m}(\gamma_0)$ obtained by Lemma \ref{strongAmalgamation} and let $\sigma = (s'', p) \in \Sigma$. Then, $\sigma \leq s'', p$ and $\sigma \leq \sigma_{\alpha_1}, \dots, \sigma_{\alpha_m}$. So,
$$\sigma \Vdash \dot{A}_{\check{\xi}_1(\alpha_1),i_1}(\gamma_0) \cap \dots \cap \dot{A}_{\check{\xi}_{m}(\alpha_{m}),i_{m}}(\gamma_0) \neq \emptyset$$
so that 
$$\sigma \Vdash \Vert \chi_{[\dot{A}_{\check{\xi}_1(\alpha_1),i_1}(\gamma_0)]} + \dots + \chi_{[\dot{A}_{\check{\xi}_m(\alpha_m),i_m}(\gamma_0)]} \Vert = m.$$
Since $\sigma \leq \sigma_{\alpha_1}, \dots, \sigma_{\alpha_m}$, 
$$\sigma \Vdash \forall 1 \leq k \leq m \quad \Vert \dot{T}(\chi_{[\dot{A}_{\check{\xi}_k(\alpha_k),i_k}(\gamma_0)]})- \dot{v}_{\check{\eta}_k(\alpha_k)} \Vert < \frac{1}{m},$$
so that, using the fact that $\sigma \leq s''$, we get that
$$\begin{array}{ll}
\sigma\Vdash & \Vert \dot{T} (\chi_{[\dot{A}_{\check{\xi}_1(\alpha_1),i_1}]} + \dots + \chi_{[\dot{A}_{\check{\xi}_m(\alpha_m),i_m}]}) \Vert\\
& \leq \Vert \displaystyle\sum_{k=1}^{m}  (\dot{T}(\chi_{[\dot{A}_{\check{\xi}_k(\alpha_k),i_k}(\gamma_0)]})-\dot{v}_{\check{\eta}_k(\alpha_k)}) +\displaystyle\sum_{k=1}^{m} \dot{v}_{\check{\eta}_k(\alpha_k)}\Vert\\
& \leq \Vert \dot{v}_{\check{\eta}_1(\alpha_1)} + \dots + \dot{v}_{\check{\eta}_m(\alpha_m)}\Vert + m \cdot \frac{1}{m} \leq 3,\end{array}$$
which contradicts the fact that $\Sigma\Vdash 3 \cdot \Vert \dot{T}^{-1} \Vert < m$.

Since the condition $p_0\in\Sigma$ was arbitrary, we showed that a dense subset of $\Sigma$ forces
the nonexistence of the embedding $\dot T$, hence it does not exist in $V[G]$.
\end{proof}

\section{Density $\mathfrak{c}$}

This section is devoted to the proof of the main result  \ref{maintheorem} for the case
$\kappa=\mathfrak{c}$. The following partial order will approximate the generic $c$-algebras of
cardinality $\omega_2=\mathfrak{c}$, one of which will induce a Banach space UG without an isomorphic embedding into a given  Banach space UG.

\begin{definition}\label{definitionQ}
$\mathbb{Q}$ is the forcing notion of conditions $q=(D_q, F_q)$ where $D_q \in [\omega_2]^{\leq\omega}$ and $F_q \subseteq Fn_{\leq\omega}(\omega,D_q)$ are such that $|F_q| \leq \omega$ and $[\omega \times D_q]^1 \subseteq F_q$, ordered by $p \leq q$ if $D_p \supseteq D_q$, $F_p \supseteq F_q$ and 
\begin{enumerate}[\emph{(Q)}]
\item given $f \in F_p$, there is $g \in F_q$ such that $G_f \cap (\omega \times D_q) \subseteq G_g$.
\end{enumerate}
\end{definition}

\begin{proposition}\label{sigmaclosed}
$\mathbb{Q}$ is $\sigma$-closed.
\end{proposition}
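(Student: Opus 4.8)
The plan is to take a decreasing sequence $q_0 \geq q_1 \geq q_2 \geq \cdots$ of conditions in $\mathbb{Q}$, with $q_n = (D_n, F_n)$, and produce a lower bound $q = (D_q, F_q)$. The natural candidate is $D_q = \bigcup_{n<\omega} D_n$ and $F_q = \bigcup_{n<\omega} F_n$. First I would check that $q$ is a legitimate condition: $D_q$ is a countable union of countable subsets of $\omega_2$, hence in $[\omega_2]^{\leq\omega}$, and likewise $F_q$ is countable; each $F_n \subseteq Fn_{\leq\omega}(\omega, D_n) \subseteq Fn_{\leq\omega}(\omega, D_q)$, so $F_q \subseteq Fn_{\leq\omega}(\omega, D_q)$; and $[\omega \times D_q]^1 = \bigcup_n [\omega \times D_n]^1 \subseteq \bigcup_n F_n = F_q$. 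So $q \in \mathbb{Q}$.

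Next I would verify $q \leq q_n$ for every $n$. Clearly $D_q \supseteq D_n$ and $F_q \supseteq F_n$, so only clause (Q) needs attention: given $f \in F_q$, I must find $g \in F_n$ with $G_f \cap (\omega \times D_n) \subseteq G_g$. Since $f \in F_q$, we have $f \in F_k$ for some $k$. If $k \leq n$ then $q_k \geq q_n$, so applying (Q) to the relation $q_k \geq q_n$ directly yields the desired $g \in F_n$. If $k > n$, then $q_k \leq q_n$, and since $f \in F_k$, clause (Q) for $q_k \leq q_n$ gives $g \in F_n$ with $G_f \cap (\omega \times D_n) \subseteq G_g$ — exactly what is needed. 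Hence in all cases (Q) holds and $q$ is a lower bound.

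The only point requiring slight care — and the closest thing to an obstacle, though it is still routine — is confirming that the index-splitting into the cases $k \leq n$ and $k > n$ really does cover every $f \in F_q$ uniformly in $n$, i.e.\ that clause (Q) is transitive enough along the decreasing sequence for the argument to go through; this is immediate from the fact that a decreasing $\omega$-sequence is totally ordered, so for fixed $n$ and fixed $k$ one of $q_k \geq q_n$ or $q_k \leq q_n$ holds and (Q) applies to whichever comparison is relevant. (One does not even need to patch together the witnesses $g$ for different $f$, since (Q) is a condition asserted separately for each $f$.) This establishes that every countable decreasing sequence in $\mathbb{Q}$ has a lower bound, i.e.\ $\mathbb{Q}$ is $\sigma$-closed.
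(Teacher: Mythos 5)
Your proposal is correct and follows essentially the same route as the paper: take unions along the decreasing sequence, check membership in $\mathbb{Q}$, and verify clause (Q) by splitting on whether the index $k$ with $f\in F_k$ satisfies $k\leq n$ or $k>n$. One small inaccuracy: in the case $k\leq n$ what you actually use is not clause (Q) of the relation $q_n\leq q_k$ (which produces witnesses in $F_k$ from elements of $F_n$, i.e.\ points the wrong way) but simply the containment $F_k\subseteq F_n$ that is part of that relation, after which $g=f$ itself works.
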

\begin{proof}
Let $(q_n)_{n \in \omega} \subseteq \mathbb{Q}$ be such that $q_{n+1} \leq q_n$ for all $n \in \omega$, where each $q_n = (D_n, F_n)$. Define $q = (D_q, F_q)$ by $D_q = \bigcup_{n\in \omega} D_n$ and $F_q = \bigcup_{n\in \omega} F_n$. As countable unions of countable sets, $D_q$ and $F_q$ are countable. Also, $[\omega \times D_q]^1 = \bigcup_{n \in \omega} ([\omega \times D_n]^1) \subseteq \bigcup_{n \in \omega} F_n = F_q$ and $F_q \subseteq \bigcup_{n \in \omega} Fn_{\leq \omega}(\omega, D_n) \subseteq Fn_{\leq \omega} (\omega, D_q)$, so that $q \in \mathbb{Q}$.

Fix $n \in \omega$ and let us now prove that $q \leq q_n$. Clearly $D_q \supseteq D_n$ and $F_q \supseteq D_n$. To prove (Q) of Definition \ref{definitionQ}, given $f \in F_q$, there is $k \in \omega$ such that $f \in F_k$. If $k \leq n$, then $q_n \leq q_k$, so that $f \in F_k \subseteq F_n$ and we are done. If $k > n$, since $q_k \leq q_n$, there is $g \in F_n$ such that $G_f \cap (\omega \times D_n) \subseteq G_g$. If suffices now to notice that $g \in F_n \subseteq F_q$.
\end{proof}

\begin{lemma}\label{densityQ}
For every $\xi \in \omega_2$ and every $q\in \Q$ there is
$p\leq q$ such that $\xi\in D_p$.
\end{lemma}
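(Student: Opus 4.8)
The plan is to imitate the density argument in Lemma~\ref{densityP}, now in the setting of $\Q$, where there is no width parameter $n$ to enlarge, only the domain $D_q$. Fix $q = (D_q, F_q) \in \Q$ and $\xi \in \omega_2$. If $\xi \in D_q$ there is nothing to prove, so I assume $\xi \notin D_q$.

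First I would take as candidate $p = (D_p, F_p)$ with $D_p = D_q \cup \{\xi\}$ and $F_p = F_q \cup [\omega \times D_p]^1$; since $[\omega \times D_q]^1 \subseteq F_q$ already, this just adjoins the countably many singletons $\{(i,\xi)\}$, $i \in \omega$, to $F_q$. That $p \in \Q$ is routine: $D_p$ is still a countable subset of $\omega_2$; $F_p$ is countable and, because $D_q \subseteq D_p$, contained in $Fn_{\leq\omega}(\omega, D_p)$; and $[\omega \times D_p]^1 \subseteq F_p$ holds by construction.

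Next I would check $p \leq q$. The inclusions $D_p \supseteq D_q$ and $F_p \supseteq F_q$ are immediate, so only condition (Q) is at issue. Given $f \in F_p$: if $f \in F_q$, take $g = f$; if $f = \{(i,\xi)\}$ is one of the newly added singletons, then $G_f \cap (\omega \times D_q) = \emptyset$ because $\xi \notin D_q$, and any $g \in F_q$ witnesses (Q) for this $f$ (for instance $g = \{(0,\eta)\}$ for some $\eta \in D_q$, which lies in $[\omega \times D_q]^1 \subseteq F_q$). Hence $p \leq q$, and $\xi \in D_p$ as desired.

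I do not expect a genuine obstacle: the statement is a basic density fact and the proof is essentially bookkeeping. The only point worth a moment's care is the one just used — the functions newly placed into $F_p$ must have graphs disjoint from $\omega \times D_q$, which is exactly why we add nothing beyond the singletons $\{(i,\xi)\}$, so that (Q) is satisfied vacuously for them.
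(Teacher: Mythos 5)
Your construction is exactly the paper's: enlarge $D_q$ to $D_q\cup\{\xi\}$, add only the singletons $\{(i,\xi)\}$ to $F_q$, and observe that their graphs miss $\omega\times D_q$ so that (Q) holds vacuously for the new functions. The proposal is correct and follows essentially the same route as the paper's proof (which even contains the same idea stated more tersely, modulo an evident typo swapping $p$ and $q$).
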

\begin{proof}
Let  $D_q=D_p\cup\{\xi\}$, $F_p=F_q\cup[\omega\times \{\xi\}]^1$.
Clearly $p\in\PP$. As the graphs of  all new partial functions in $F_p$ have empty
intersections with $(\omega \times D_q)$ we conclude that $p\leq q$.
\end{proof}

Given a model $V$ and a $\mathbb{Q}$-generic filter $G$ over $V$, for each $\xi \in \omega_2$ and each $i \in \omega$, we define in $V[G]$ the following set:
$$A_{\xi,i} = \{f \in Fn_{\leq \omega}(\omega, \omega_2): \exists q \in G \text{ such that } f\in F_q \text{ and } f(i)=\xi\}.$$
Let $\mathcal{B}$ be the Boolean algebra generated by $\{A_{\xi, i}: (i,\xi) \in \omega \times \omega_2\}$.

\begin{proposition}
In $V[G]$, we have that for every $i \in \omega$, $(A_{\xi,i})_{\xi<\omega_2}$ are pairwise disjoint nonempty sets such that whenever 
$(i_1,\xi_1),...,(i_k,\xi_k), (i, \xi)$ are distinct
pairs, then 
$$A_{i,\xi}\setminus (A_{i_1,\xi_1}\cup...\cup A_{i_k,\xi_k})\not=\emptyset.$$
Therefore, the Boolean algebra $\mathcal{B}$ generated by them is a c-algebra of cardinality $\omega_2$ and its Stone space $K$ is a uniform Eberlein compact space of weight $\omega_2$.
\end{proposition}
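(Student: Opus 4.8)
The plan is to mirror, word for word, the proof already given for $\mathbb{P}$ in Proposition~\ref{UEP}, now in the setting of $\mathbb{Q}$. Two things must be checked: that the sets $(A_{\xi,i})_{\xi<\omega_2}$ are pairwise disjoint and nonempty for each fixed $i$, and that condition (c) of Definition~\ref{c-algebra} holds, i.e. for distinct pairs $(i_1,\xi_1),\dots,(i_k,\xi_k),(i,\xi)$ we have $A_{i,\xi}\setminus(A_{i_1,\xi_1}\cup\dots\cup A_{i_k,\xi_k})\ne\emptyset$. The single combinatorial fact that powers everything is Lemma~\ref{densityQ}: for every $\xi\in\omega_2$ and every $q\in\mathbb{Q}$ there is $p\le q$ with $\xi\in D_p$, and moreover (inspecting the proof of that lemma) the witnessing extension can be taken to add the singleton functions $[\omega\times\{\xi\}]^1$ to $F_p$. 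In fact it is cleaner to first prove the obvious strengthening: for any finite set of distinct pairs $\{(i_1,\xi_1),\dots,(i_m,\xi_m)\}$ and any $q\in\mathbb{Q}$, there is $p\le q$ with all the $\xi_j\in D_p$ and with $f_0=\{(i_1,\xi_1),\dots,(i_m,\xi_m)\}\in F_p$; one simply throws $f_0$ together with the needed singletons into $F_q$, and checks that the graphs of the added functions meet $\omega\times D_q$ only in singletons already lying in $F_q$, so (Q) is preserved. This is the exact analogue, in the $\sigma$-closed setting, of Lemma~\ref{strongAmalgamation}, but for a single condition, so there is no amalgamation to do.

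With that density fact in hand the rest is a routine genericity argument. Fix distinct pairs $(i_1,\xi_1),\dots,(i_k,\xi_k),(i,\xi)$ and work in $V[G]$. The set of conditions $p$ with $(i_1,\xi_1),\dots,(i_k,\xi_k),(i,\xi)$ all realized as domain elements and $f_0=\{(i,\xi)\}\in F_p$ is dense by the previous paragraph, so some such $p$ lies in $G$; then $f_0=\{(i,\xi)\}\in A_{i,\xi}$. On the other hand $f_0(i)=\xi$ and $f_0$ is defined only at $i$, so $f_0\notin A_{i_j,\xi_j}$ for any $j$: indeed, if $f_0\in A_{i_j,\xi_j}$ we would need $f_0(i_j)=\xi_j$, which fails since either $i_j\ne i$ (so $f_0(i_j)$ is undefined) or $i_j=i$ and then $\xi_j=f_0(i)=\xi$ would contradict distinctness of the pairs. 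Hence $f_0\in A_{i,\xi}\setminus(A_{i_1,\xi_1}\cup\dots\cup A_{i_k,\xi_k})$, which simultaneously gives nonemptiness of each $A_{\xi,i}$ (take $k=0$) and condition (c). Pairwise disjointness of $(A_{\xi,i})_{\xi<\omega_2}$ for fixed $i$ is immediate from the definition: if $f\in A_{\xi,i}\cap A_{\xi',i}$ then $f(i)=\xi$ and $f(i)=\xi'$, forcing $\xi=\xi'$, since each $f$ is a function.

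Finally, having verified that $\mathcal{B}$ satisfies Definition~\ref{c-algebra}, we invoke the cited result of Bell (Theorems~2.1 and 2.2 of \cite{bell}, building on \cite{imageseberlein}) that the Stone space of a (c)-algebra is a uniform Eberlein compact. The cardinality of $\mathcal{B}$ is $\omega_2$ because it is generated by the $\omega_2$-sized family $\{A_{\xi,i}:(i,\xi)\in\omega\times\omega_2\}$ and the $A_{\xi,i}$'s are pairwise distinct (they are even disjoint for fixed $i$, and for $\xi\ne\xi'$ the witness $f_0=\{(i,\xi)\}$ separates $A_{\xi,i}$ from $A_{\xi',i}$ as above), so the weight of $K$ is $\omega_2$. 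I do not expect any genuine obstacle here: the only point requiring the slightest care is noticing that Lemma~\ref{densityQ}'s construction, and its finite-pattern strengthening, keeps (Q) intact because the newly added functions have graphs meeting $\omega\times D_q$ in sets already present in $F_q$ — exactly the mechanism used throughout Section~2, now in a $\sigma$-closed rather than c.c.c.\ forcing, which if anything makes the density arguments easier since no chain-condition bookkeeping is needed.
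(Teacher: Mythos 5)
Your proof is correct and follows essentially the same route as the paper: both rest on Lemma~\ref{densityQ} (whose proof places the singleton functions $[\omega\times\{\xi\}]^1$ into $F_p$), so that by genericity the one-point function $\{(i,\xi)\}$ lands in $A_{\xi,i}$ and, being defined only at $i$ with value $\xi$, witnesses both nonemptiness and condition (c), with pairwise disjointness immediate from single-valuedness. The extra ``finite-pattern strengthening'' you introduce is harmless but unnecessary for this proposition, since the singleton witness already suffices.
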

\begin{proof}
By \ref{densityQ}, for all distinct $(i_1,\xi_1),...,(i_k,\xi_k), (i, \xi)$ we have 
$$\{(i,\xi)\}\in A_{i,\xi}\setminus (A_{i_1,\xi_1}\cup...\cup A_{i_k,\xi_k}).$$
So each  $A_{\xi,i}$ is nonempty and the condition (c) of Definition \ref{c-algebra} is satisfied. It also follows directly from the fact that $f$'s are functions and that $(i, \xi) \in A_{\xi, i}$ that $(A_{\xi,i})_{\xi<\omega_2}$ are pairwise disjoint.
\end{proof}

For each $(i, \xi) \in \omega \times \omega_2$, let $\dot{A}_{\xi,i}$ be a $\mathbb{Q}$-name for $A_{\xi,i}$. 

\begin{definition}\label{isomorphicQ}
Given $q_1 = (D_1, F_1), q_2 = (D_2, F_2) \in \mathbb{Q}$, we say that they are isomorphic if there is an order-preserving bijection $e: D_1 \rightarrow D_2$ such that $e|_{D_1 \cap D_2} = id$ and for all $f \in Fn_{\leq\omega}(\omega, \omega_2)$, $f \in F_1$ if and only if $e[f] \in F_2$, where $e[f](i) = e(f(i))$. 
\end{definition}

\begin{lemma}\label{minimalAmalgamationQ}
Let $q_k= (D_k, F_k)$ in $\mathbb{Q}$, for $1 \leq k \leq m$, be pairwise isomorphic conditions such that $(D_k)_{1 \leq k\leq m}$ is a $\Delta$-system with a countable root $D$. 
Then, there is $q \in\mathbb{Q}$, $q \leq q_1, \dots, q_m$
such that for any distinct $i,i'\in \omega$, any distinct $1\leq k,k'\leq m$ and any $\xi\in D_k\setminus D$ and
$\xi'\in D_{k'}\setminus D$ we have 
$$q \Vdash \dot{A}_{\check{\xi},i} \cap \dot{A}_{\check{\xi}',i'}=  \emptyset.$$
\end{lemma}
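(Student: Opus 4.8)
The plan is to mimic the construction used for $\mathbb{P}$ in Lemma \ref{minimalAmalgamation}, taking the ``minimal'' amalgamation that adds no new partial functions beyond the union. Concretely, I would set $q = (D_q, F_q)$ with $D_q = D_1 \cup \dots \cup D_m$ and $F_q = F_1 \cup \dots \cup F_m$. Since each $q_k \in \mathbb{Q}$ we have $D_k \in [\omega_2]^{\leq\omega}$ and $F_k \subseteq Fn_{\leq\omega}(\omega, D_k)$ with $|F_k| \leq \omega$, so $D_q$ and $F_q$ are countable as finite unions of countable sets, and $[\omega \times D_q]^1 = \bigcup_k [\omega \times D_k]^1 \subseteq \bigcup_k F_k = F_q \subseteq \bigcup_k Fn_{\leq\omega}(\omega, D_k) \subseteq Fn_{\leq\omega}(\omega, D_q)$; hence $q \in \mathbb{Q}$.

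Next I would verify $q \leq q_k$ for each fixed $k$. The inclusions $D_q \supseteq D_k$ and $F_q \supseteq F_k$ are immediate, so only condition (Q) needs checking: given $f \in F_q$, if $f \in F_k$ it is trivial, and if $f \in F_{k'} \setminus F_k$ for some $k' \neq k$, let $e\colon D_{k'} \to D_k$ be the order-preserving bijection witnessing that $q_{k'}$ and $q_k$ are isomorphic. Then $g = e[f] \in F_k$, and for any $(i,\xi) \in G_f \cap (\omega \times D_k)$ we have $\xi \in D_{k'}$ (since $f \in Fn_{\leq\omega}(\omega, D_{k'})$) and $\xi \in D_k$, so $\xi \in D_k \cap D_{k'} = D$, whence $e(\xi) = \xi$ and thus $g(i) = e(f(i)) = \xi$; this gives $G_f \cap (\omega \times D_k) \subseteq G_g$. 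This step is essentially identical to the $\mathbb{P}$ case, with $n$ replaced by $\omega$ throughout, and the finite/countable bookkeeping adjusted; no genuine difficulty arises because the isomorphisms fix the (countable) root pointwise.

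Finally, for the forcing statement: fix distinct $i, i' \in \omega$, distinct $k, k'$, and $\xi \in D_k \setminus D$, $\xi' \in D_{k'} \setminus D$. The key observation is that no function in $F_q$ simultaneously takes value $\xi$ at $i$ and $\xi'$ at $i'$ — indeed such an $f$ would have to lie in some $F_j \subseteq Fn_{\leq\omega}(\omega, D_j)$, forcing $\xi, \xi' \in D_j$; but $\xi \in D_k \setminus D$ and $\xi' \in D_{k'} \setminus D$ with $k \neq k'$ makes this impossible by the $\Delta$-system property. I would then argue as in Lemma \ref{minimalAmalgamation}: take any $\mathbb{Q}$-generic $G$ with $q \in G$; for any $q' \in G$ pick $q'' \in G$ below both $q$ and $q'$; by (Q) no function in $F_{q''}$ has $g(i) = \xi$ and $g(i') = \xi'$ (since such a $g$ would restrict to one in $F_q$), hence neither does any function in $F_{q'}$. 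By the definition of the $A_{\xi,i}$'s this yields $A_{\xi,i} \cap A_{\xi',i'} = \emptyset$ in $V[G]$, and since $G$ was arbitrary, $q \Vdash \dot{A}_{\check\xi,i} \cap \dot{A}_{\check\xi',i'} = \emptyset$. I expect the only point requiring care — though still routine — is confirming that property (Q) genuinely propagates the ``no such function'' condition downward through the generic filter, exactly as in the $\mathbb{P}$ argument; there is no new obstacle introduced by passing from finite $n_p$ to $\omega$ and from $\omega_1$ to $\omega_2$.
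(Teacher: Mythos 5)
Your proposal is correct and follows essentially the same route as the paper's proof: the same minimal amalgamation $q=(D_1\cup\dots\cup D_m,\,F_1\cup\dots\cup F_m)$, the same use of the isomorphisms fixing the root to verify condition (Q), and the same generic-filter argument showing that the absence in $F_q$ of a function taking value $\xi$ at $i$ and $\xi'$ at $i'$ propagates to every condition in the generic filter. No gaps.
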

\begin{proof}
We define $q= (D_q, F_q)$ by letting $D_q = D_1 \cup \dots \cup D_m$ and $F_q = F_1 \cup \dots \cup F_m$. Let us prove that $q \in \mathbb{Q}$ and that $q \leq q_k$ for every $1 \leq k \leq m$. 

As finite unions of countable sets, $D_q$ and $F_q$ are countable. Notice that $\omega \times D_q = (\omega \times D_1) \cup \dots \cup (\omega \times D_m) \subseteq F_1 \cup \dots \cup F_m =F_q$ and that $F_q \subseteq Fn_{\leq \omega}(\omega, D_1) \cup \dots \cup Fn_{\leq \omega} (\omega, D_m) \subseteq Fn_{\leq \omega} (\omega, D_q)$, so that $q \in \mathbb{Q}$.

Fix $1 \leq k \leq m$ and let us now show that $q \leq q_k$. By the definition of $q$, $D_q \supseteq D_k$ and $F_q \supseteq F_k$. Given $f \in F_q$, (Q) of Definition \ref{definitionQ} is trivially satisfied if $f \in F_k$. So, suppose $f \in F_{k'} \setminus F_k$ for some $k'\neq k$ and let $e$ be the order-preserving bijection from $D_{k'}$ onto $D_k$. Since $q_k$ and $q_{k'}$ are isomorphic and $f \in F_{k'}$, we get that $g = e[f] \in F_k$. Let us prove that $G_f \cap (\omega \times D_k) = G_g \cap (\omega \times D) \subseteq G_g$: given $(i,\xi) \in G_f \cap (\omega \times D_k)$, we have that $f(i)=\xi$ and since $f \in F_{k'} \subseteq \omega \times D_{k'}$, we get that $\xi \in D_k \cap D_{k'} = D$. Then $g(i) = e[f](i) = e(f(i)) = e(\xi) = \xi$, since $\xi \in D$. This proves that $G_f \cap (\omega \times D_k) \subseteq G_g$ and concludes the proof.

Now let $i, i'\in \omega$ be distinct, $1\leq k<k'\leq m$ and let $\xi\in D_k\setminus D$ and $\xi'\in D_{k'}\setminus D$. 
Note that in $F_q$ there is no function $f$ such that $f(i)=\xi$ and $f(i')=\xi'$. Let $G$ be a $\mathbb{Q}$-generic filter such that $q\in G$. Given any $q'\in G$, there is $q''\in G$ such that $q''\leq q,q'$. By Definition \ref{definitionQ} (Q), 
no function $g\in F_{q''}$ satisfies $g(i)=\xi$ and $g(i')=\xi'$, hence there is no such function in $F_{q'}$. It follows from the definition 
of $A_{\xi,i}$'s that $A_{\xi,i}\cap A_{\xi',i'}=\emptyset$ in $V[G]$. Since
$G$ was an arbitrary $\mathbb{Q}$-generic filter containing $q$, we conclude that 
$q \Vdash \dot{A}_{\check{\xi},i} \cap \dot{A}_{\check{\xi}',i'}=  \emptyset$.
\end{proof}

\begin{theorem}
Assuming CH, given $(q_\alpha)_{\alpha < \omega_2} \subseteq \mathbb{Q}$, there is $A \in [\omega_2]^{\omega_2}$ such that for every $\alpha_1< \dots < \alpha_m \in A$, there is $q \in \mathbb{Q}$, $q \leq q_{\alpha_1}, \dots q_{\alpha_m}$. In particular, $\mathbb{Q}$ satisfies the  $\omega_2$-c.c.
\end{theorem}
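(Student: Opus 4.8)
The plan is to run the standard $\Delta$-system plus $\omega_2$-many reads argument, but with the wrinkle that $\mathbb{Q}$-conditions are countably infinite, so the usual finite counting must be replaced by a CH-based counting and the $\Delta$-system lemma must be used in its version for uncountably many countable sets (which needs CH, or at least $2^{\omega}<\omega_2$ together with the relevant cardinal arithmetic). First I would apply the $\Delta$-system lemma for $\omega_2$-many countable sets (valid under CH since $\omega_1^{\omega}=\omega_1<\omega_2=\mathrm{cf}(\omega_2)$) to the family $(D_{q_\alpha})_{\alpha<\omega_2}$ to extract $A_0\in[\omega_2]^{\omega_2}$ on which the $D_{q_\alpha}$ form a $\Delta$-system with some countable root $D$. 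Next, using CH, I would argue that up to isomorphism there are only $2^{\omega}=\omega_1$ many possibilities for a condition of the form $(D_{q_\alpha},F_{q_\alpha})$: fixing an order isomorphism of $D_{q_\alpha}$ onto a fixed countable ordinal carries $F_{q_\alpha}$ to a subset of $Fn_{\leq\omega}(\omega,\text{that ordinal})$, and there are only $\omega_1$ many such countable subsets; moreover one should also pin down where the root $D$ sits inside the order type, which is again only $\omega_1$ many choices. So by the pigeonhole principle (since $\omega_2$ is regular and $>\omega_1$) there is $A\in[\omega_2]^{\omega_2}$, $A\subseteq A_0$, on which all the $q_\alpha$ are pairwise isomorphic in the sense of Definition~\ref{isomorphicQ} and the $D_{q_\alpha}$ still form a $\Delta$-system with root $D$.

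Once $A$ is fixed, the compatibility conclusion is immediate: given $\alpha_1<\dots<\alpha_m$ in $A$, the conditions $q_{\alpha_1},\dots,q_{\alpha_m}$ are pairwise isomorphic and their domains form a $\Delta$-system with countable root $D$, so Lemma~\ref{minimalAmalgamationQ} produces $q\in\mathbb{Q}$ with $q\leq q_{\alpha_1},\dots,q_{\alpha_m}$ (in fact even one satisfying the extra disjointness clause, which we do not need here). Finally, the $\omega_2$-c.c. follows: any subset of $\mathbb{Q}$ of size $\omega_2$ contains, by the above, two (indeed $m$ for every $m$) compatible elements, so there is no antichain of size $\omega_2$.

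I expect the only genuine subtlety to be the counting step that reduces to $\omega_1$-many isomorphism types under CH. The point to be careful about is that \emph{isomorphism} in the sense of Definition~\ref{isomorphicQ} requires the bijection $e$ to fix $D_1\cap D_2$ pointwise, so it is not enough to know that $(D_{q_\alpha},F_{q_\alpha})$ and $(D_{q_\beta},F_{q_\beta})$ are "abstractly" isomorphic via some order isomorphism; one must ensure the isomorphism can be taken to fix the common root $D$. This is handled by first normalizing on $A_0$ so that the root $D$ is literally the same set for all $\alpha$, and then, when classifying types, recording not just the order isomorphism type of $(D_{q_\alpha},F_{q_\alpha})$ but also the position of the fixed set $D$ within $D_{q_\alpha}$ (equivalently, fixing an order isomorphism $h_\alpha:D_{q_\alpha}\to\delta_\alpha$ onto an ordinal $\delta_\alpha<\omega_1$ and grouping the $\alpha$ by the pair $(\delta_\alpha, h_\alpha[D], h_\alpha[F_{q_\alpha}])$); for $\alpha,\beta$ in the same group, $h_\beta^{-1}\circ h_\alpha$ restricted appropriately is an order isomorphism taking $F_{q_\alpha}$ to $F_{q_\beta}$ and fixing $D$ pointwise, as required. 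Everything else — the $\Delta$-system lemma instance and the final antichain argument — is routine.
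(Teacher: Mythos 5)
Your proposal is correct and follows essentially the same route as the paper: apply the $\Delta$-system lemma for countable sets under CH, thin to $\omega_2$ many pairwise isomorphic conditions by a CH counting argument, and conclude via Lemma~\ref{minimalAmalgamationQ}. The extra care you take to ensure the isomorphisms fix the root $D$ pointwise (by recording the position of $D$ in the order type) is a detail the paper leaves implicit, and your handling of it is sound.
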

\begin{proof}
Let $(q_\alpha)_{\alpha<\omega_2} \subseteq \mathbb{Q}$, where each $q_\alpha = (D_\alpha, F_\alpha)$. Using CH, by the $\Delta$-system lemma for countable sets (see \cite{kunen}), there is $A'\in [\omega_2]^{\omega_2}$ such that $(D_\alpha)_{\alpha \in A}$ is a $\Delta$-system with countable root $D$. By thinning out using CH and counting arguments, in particular the fact that there are $\mathfrak{c}=\omega_1$ countable sets of the set of $D^\omega$ of cardinality $\mathfrak{c}=\omega_1$, there is $A \in [A']^{\omega_2}$ such that for every $\alpha, \beta \in A$, $q_\alpha$ and $q_\beta$ are isomorphic. Now, given $\alpha_1 < \dots < \alpha_m \in A$, by Lemma \ref{minimalAmalgamation} there is $q \in \mathbb{Q}$, $q \leq q_{\alpha_1}, \dots, q_{\alpha_m}$, which concludes the proof.
\end{proof}

\begin{lemma}\label{strongAmalgamationQ}
Let $q_k= (D_k, F_k)$ in $\mathbb{Q}$, for $1 \leq k \leq m$, be pairwise isomorphic conditions such that $(D_k)_{1 \leq k\leq m}$ is a $\Delta$-system with a countable root $D$. Given, for all $1 \leq k \leq m$, $\xi_k \in D_k \setminus D$ and distinct $i_k \in \omega$, there is $q \in\mathbb{Q}$, $q \leq q_1, \dots, q_m$ such that
$$q \Vdash \dot{A}_{\check{\xi}_1,i_1} \cap \dots \cap \dot{A}_{\check{\xi}_m,i_m} \neq \emptyset.$$
\end{lemma}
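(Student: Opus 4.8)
The plan is to mimic the proof of Lemma \ref{strongAmalgamation} in the density $\omega_1$ case, adding a single new countable partial function $f_0$ whose graph contains the pairs $(i_k,\xi_k)$ and then verifying that the resulting triple still lies in $\mathbb{Q}$ and still refines each $q_k$. Concretely, I would define $q=(D_q,F_q)$ by $D_q = D_1\cup\dots\cup D_m$ and $F_q = F_1\cup\dots\cup F_m\cup\{f_0\}$, where $f_0 = \{(i_1,\xi_1),\dots,(i_m,\xi_m)\}$; note that $f_0$ is a genuine partial function precisely because the $i_k$ are distinct, and it is a finite (hence countable) partial function from $\omega$ into $D_q$.

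First I would check $q\in\mathbb{Q}$: as a finite union of countable sets, $D_q$ and $F_q$ are countable; $[\omega\times D_q]^1 = \bigcup_k [\omega\times D_k]^1 \subseteq \bigcup_k F_k \subseteq F_q$; and $F_q\setminus\{f_0\}\subseteq \bigcup_k Fn_{\leq\omega}(\omega,D_k)\subseteq Fn_{\leq\omega}(\omega,D_q)$ while $f_0\in Fn_{<\omega}(\omega,D_q)\subseteq Fn_{\leq\omega}(\omega,D_q)$. Next I would verify $q\leq q_k$ for each fixed $k$: clearly $D_q\supseteq D_k$ and $F_q\supseteq F_k$, so only condition (Q) needs care, and I would split into three cases exactly as in Lemma \ref{strongAmalgamation}. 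If $f\in F_k$, (Q) is trivial. If $f\in F_{k'}\setminus F_k$ for some $k'\neq k$, let $e\colon D_{k'}\to D_k$ be the order-preserving bijection coming from the isomorphism of $q_k$ and $q_{k'}$; then $g=e[f]\in F_k$ and, for any $(i,\xi)\in G_f\cap(\omega\times D_k)$, since $\xi\in D_{k'}$ as well we get $\xi\in D_k\cap D_{k'}=D$, whence $e(\xi)=\xi$ and $g(i)=\xi$, so $G_f\cap(\omega\times D_k)\subseteq G_g$. If $f=f_0$, then since $\xi_k\in D_k\setminus D$ and $\xi_{k'}\notin D_k$ for $k'\neq k$ (as $\xi_{k'}\in D_{k'}\setminus D$ and $D_k\cap D_{k'}=D$), we get $G_{f_0}\cap(\omega\times D_k)=\{(i_k,\xi_k)\}\subseteq [\omega\times D_k]^1\subseteq F_k$, which witnesses (Q).

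Finally, since $f_0\in F_q$ and $f_0(i_k)=\xi_k$ for every $k$, by the very definition of $A_{\xi_k,i_k}$ any generic filter containing $q$ puts $f_0$ into each $A_{\xi_k,i_k}$, so $q$ forces $\check f_0\in \dot{A}_{\check\xi_1,i_1}\cap\dots\cap\dot{A}_{\check\xi_m,i_m}$, giving $q\Vdash \dot{A}_{\check\xi_1,i_1}\cap\dots\cap\dot{A}_{\check\xi_m,i_m}\neq\emptyset$. I expect no serious obstacle here: the argument is the verbatim $\sigma$-closed analogue of Lemma \ref{strongAmalgamation}, the only points to be careful about being that the root $D$ is now countable (so one must use $D_k\cap D_{k'}=D$ rather than disjointness) and that all finiteness bookkeeping is replaced by countability bookkeeping; the genuinely new content of the $\kappa=\mathfrak c$ section lies elsewhere, in the $\omega_2$-chain-condition argument and the final forcing construction, not in this lemma.
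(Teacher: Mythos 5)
Your proof is correct and follows essentially the same route as the paper's own argument: the same condition $q=(D_q,F_q)$ with the extra function $f_0$, the same three-case verification of (Q), and the same conclusion that $f_0$ witnesses the nonempty intersection. Your Case 3 is in fact slightly more careful than the paper's, since you explicitly justify why $G_{f_0}\cap(\omega\times D_k)$ reduces to the single pair $(i_k,\xi_k)$ using $D_k\cap D_{k'}=D$.
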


\begin{proof}
We define $q= (D_q, F_q)$ by letting  $D_q = D_1 \cup \dots \cup D_m$ and 
$$F_q = F_1 \cup \dots \cup F_m \cup \{f_0\},$$
where $f_0= \{(i_1,\xi_1), \dots, (i_m, \xi_m)\}$.
Let us check that $q \in \mathbb{Q}$ and that $q \leq q_k$ for every $1 \leq k \leq m$. 

As finite unions of countable sets, $D_q$ and $F_q$ are countable. Notice that $[\omega \times D_q]^1 = [\omega \times D_1]^1 \cup \dots \cup [\omega \times D_m]^1 \subseteq F_1 \cup \dots \cup F_m \subseteq F_q$ and that $F_q \setminus \{f_0\} \subseteq Fn_{\leq \omega}(\omega, D_1) \cup \dots \cup Fn_{\leq \omega} (\omega, D_m) \subseteq Fn_{\leq \omega} (\omega, D_q)$. Since $f_0 \in Fn_{\leq \omega}(\omega, D_1 \cup \dots \cup D_m) \subseteq Fn_{\leq \omega} (\omega, D_q)$, we get that $q \in \mathbb{Q}$.

Fix $1 \leq k \leq m$ and let us now verify that $q \leq q_k$. By the definition of $q$, $D_q \supseteq D_k$ and $F_q \supseteq F_k$. To check (Q) of Definition \ref{definitionQ}, given $f \in F_q$, let us consider three cases:

\textbf{Case 1.} If $f \in F_k$, then (Q) is trivially satisfied. 

\textbf{Case 2.} If $f \in F_{k'} \setminus F_k$ for some $k'\neq k$, let $e$ be the order-preserving bijection from $D_{k'}$ onto $D_k$. Since $q_k$ and $q_{k'}$ are isomorphic and $f \in F_{k'}$, we get that $g = e[f] \in F_k$. Let us prove that $G_f \cap (\omega \times D_k) = G_g \cap (\omega \times D) \subseteq G_g$: given $(i,\xi) \in G_f \cap (\omega \times D_k)$, we have that $f(i)=\xi$ and since $f \in F_{k'} \subseteq \omega \times D_{k'}$, we get that $\xi \in D_k \cap D_{k'} = D$. Then $g(i) = e[f](i) = e(f(i)) = e(\xi) = \xi$, since $\xi \in D$. This proves that $G_f \cap (\omega \times D_k) \subseteq G_g$, as we wanted.

\textbf{Case 3.} If $f = f_0= \{(i_1,\xi_1), \dots, (i_m, \xi_m)\}$, then $G_f \cap (\omega \times D_k) = \{(i_k, \xi_k)\} \subseteq F_k$ by Definition \ref{definitionQ}.

Finally, since $f_0 \in F_q$, $q$ forces that $\check{f}_0 \in \dot{A}_{\check{\xi}_1,i_1} \cap \dots \cap \dot{A}_{\check{\xi}_m,i_m}$ and we get that 
$$q \Vdash \dot{A}_{\check{\xi}_1,i_1} \cap \dots \cap \dot{A}_{\check{\xi}_m,i_m} \neq \emptyset,$$
which concludes the proof.
\end{proof}

\begin{definition}
$\Pi$ is the product of $\omega_3$ copies of $\mathbb{Q}$, with countable supports, that is:
$$\Pi = \{\pi: dom(\pi) \rightarrow \mathbb{Q}: dom(\pi) \in [\omega_3]^{\leq \omega}\},$$
ordered by $\pi_1 \leq \pi_2$ if $dom(\pi_1) \supseteq dom(\pi_2)$ and for every $\alpha \in dom(\pi_2)$, $\pi_1(\alpha) \leq_{\Q} \pi_2(\alpha)$. Given $A \subseteq \omega_3$, let 
$$\Pi_A = \{\pi \in \Pi: dom(\pi) \subseteq A\}.$$
$\mathbb{R}$ is the product of Cohen forcing adding $\omega_2$ Cohen reals with $\Pi$, that is,
$$\mathbb{R} = Fn_{<\omega}(\omega_2, 2) \times \Pi.$$
\end{definition}

\begin{proposition}\label{isomorphism}
 Given $\gamma_0 \in \omega_3$, if $A = \omega_3 \setminus \{\gamma_0\}$, then the forcing $\Pi_A$ is isomorphic to $\Pi$ and, therefore, 
 $$\mathbb{R} = Fn_{<\omega}(\omega_2, 2) \times \Pi \sim Fn_{<\omega}(\omega_2, 2) \times \Pi_A \times \mathbb{Q} \sim \mathbb{R} \times \mathbb{Q}.$$
\end{proposition}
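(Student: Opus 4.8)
The plan is to prove Proposition~\ref{isomorphism} by constructing the relevant order-isomorphisms explicitly, exactly as in the proof of Proposition~\ref{isomorphismP}, and then assembling the chain of equivalences from the associativity and commutativity of products of forcings. First I would fix $\gamma_0 \in \omega_3$ and choose a bijection $b: \omega_3 \to \omega_3 \setminus \{\gamma_0\}$ (which exists since $|\omega_3| = |\omega_3 \setminus \{\gamma_0\}|$). This bijection lifts to a map $\Phi: \Pi \to \Pi_A$ defined by $\Phi(\pi) = \pi \circ b^{-1}$, i.e.\ $\mathrm{dom}(\Phi(\pi)) = b[\mathrm{dom}(\pi)]$ and $\Phi(\pi)(b(\alpha)) = \pi(\alpha)$. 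One checks that $b$ carries $[\omega_3]^{\leq\omega}$ onto itself, so $\Phi(\pi) \in \Pi_A$, and that $\Phi$ is a bijection preserving and reflecting the order $\leq$ of $\Pi$, since the order on $\Pi$ (and on $\Pi_A$) is defined coordinatewise and $\Phi$ merely relabels coordinates without touching the $\mathbb{Q}$-values; this is a verbatim analogue of Proposition~\ref{isomorphismP}. Hence $\Pi_A \sim \Pi$.

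Next I would record the standard facts about finite/countable-support products that give the remaining equivalences. Splitting off the single coordinate $\gamma_0$ from the countable-support product gives $\Pi \sim \Pi_A \times \Pi_{\{\gamma_0\}} \sim \Pi_A \times \mathbb{Q}$, since $\Pi_{\{\gamma_0\}}$ is visibly isomorphic to $\mathbb{Q}$ (a condition with domain contained in the singleton $\{\gamma_0\}$ is either the empty condition or determined by its single value in $\mathbb{Q}$, and the empty condition is the maximum of $\mathbb{Q}$ as well). Combining with the isomorphism $\Pi_A \sim \Pi$ from the previous step and using associativity of the product, one obtains
$$\mathbb{R} = Fn_{<\omega}(\omega_2,2) \times \Pi \sim Fn_{<\omega}(\omega_2,2) \times \Pi_A \times \mathbb{Q} \sim \big(Fn_{<\omega}(\omega_2,2) \times \Pi\big) \times \mathbb{Q} = \mathbb{R} \times \mathbb{Q},$$
where the middle step applies the coordinate-relabelling isomorphism only to the $\Pi$ factor while leaving the Cohen factor and the new $\mathbb{Q}$ factor fixed.

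There is essentially no hard part here: the proposition is a bookkeeping statement, and the only point requiring a moment's care is checking that the relabelling map $\Phi$ genuinely respects countable supports and the coordinatewise order, which is immediate from the definitions of $\Pi$ and $\Pi_A$. I would therefore keep the proof short, writing just ``Lift to $\mathbb{Q}$ a bijection between $\omega_3$ and $\omega_3 \setminus \{\gamma_0\}$; the displayed equivalences then follow by associativity and commutativity of products of forcing notions, together with $\Pi_{\{\gamma_0\}} \sim \mathbb{Q}$,'' in parallel with the one-line proof of Proposition~\ref{isomorphismP}. The one mild subtlety to flag for the reader is that $\Pi$ uses countable supports while the Cohen part of $\mathbb{R}$ uses finite supports, so the factorization of $\mathbb{R}$ is a product of two differently-supported forcings rather than a single uniform-support product; this does not affect the argument since we never amalgamate the Cohen coordinates with the $\Pi$ coordinates.
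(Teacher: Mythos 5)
Your proposal is correct and follows exactly the paper's approach: the paper's entire proof is ``Lift a bijection between $\omega_3$ and $\omega_3\setminus\{\gamma_0\}$ to an isomorphism of $\Pi$ and $\Pi_A$,'' and your argument is simply a fully written-out version of that relabelling together with the routine splitting-off of the coordinate $\gamma_0$. No differences of substance to report.
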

\begin{proof}
Lift a bijection between  $\omega_3$ and $\omega_3\setminus\{\gamma_0\}$ to
an isomorphism of $\Pi$ and $\Pi_A$.
\end{proof}

Given a model $V$ of CH and an $\mathbb{R}$-generic filter $G$ over $V$, for each $\gamma_0 \in \omega_3$, each $\xi \in \omega_2$ and each $i \in \omega$, we define in $V[G]$ the following set:
$$A_{\xi,i}(\gamma_0) = \{f \in Fn_{\leq \omega}(\omega, \omega_2)\cap V: \exists r = (C,\pi) \in G \text{ such that } f\in F_{\pi(\gamma_0)} \text{ and } f(i)=\xi\}.$$
Let $\mathcal{B}_{\gamma_0}$ be 
a subalgebra of the Boolean algebra
$\wp(Fn_{\leq \omega}(\omega, \omega_2))$ generated by $\{A_{\xi, i}(\gamma_0): (i,\xi) \in \omega \times \omega_2\}$.

\begin{proposition}
In $V[G]$, we have that for every $\gamma_0 \in \omega_3$ and every $i \in \omega$, $(A_{\xi,i}(\gamma_0))_{\xi<\omega_2}$ are pairwise disjoint nonempty sets such that whenever 
$(i_1,\xi_1),...,(i_k,\xi_k), (i, \xi)$ are distinct
pairs, then 
$$A_{i,\xi}(\gamma_0)\setminus (A_{i_1,\xi_1}(\gamma_0)\cup...\cup A_{i_k,\xi_k}(\gamma_0))\not=\emptyset.$$
Therefore, the Boolean algebra $\mathcal{B}_{\gamma_0}$ generated by them is a c-algebra of cardinality $\omega_2$ and its Stone space $K_{\gamma_0}$ is a uniform Eberlein compact space of weight $\omega_2$.
\end{proposition}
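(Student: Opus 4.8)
The plan is to mimic almost verbatim the proof of the corresponding Proposition in the $\omega_1$ section, since the combinatorial content is identical and only the bookkeeping of cardinals and the presence of the Cohen factor changes. Concretely, I would fix $\gamma_0\in\omega_3$ and $i\in\omega$, and work in $V[G]$ where $G$ is $\mathbb{R}$-generic over a model $V$ of CH. The key observation is that $A_{\xi,i}(\gamma_0)$ depends only on the coordinate $\gamma_0$ of the $\Pi$-part of conditions, so that the relevant genericity is really $\mathbb{Q}$-genericity over $V[G\cap(\mathbb{R}\setminus\{\gamma_0\})]$, and one may invoke Lemmas \ref{densityQ}, \ref{minimalAmalgamationQ} and \ref{strongAmalgamationQ} at the coordinate $\gamma_0$.

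The steps, in order, are as follows. First, pairwise disjointness: if $\xi\neq\xi'$ then by Lemma \ref{densityQ} (applied in the appropriate dense way, or rather directly from the definition) no $f\in Fn_{\leq\omega}(\omega,\omega_2)\cap V$ can have $f(i)=\xi$ and $f(i)=\xi'$ simultaneously since $f$ is a function, and any two conditions in $G$ have a common extension in $G$ whose $F$-part (at coordinate $\gamma_0$) witnesses (Q); hence $A_{\xi,i}(\gamma_0)\cap A_{\xi',i}(\gamma_0)=\emptyset$. Second, nonemptiness and the (c)-type separation: for distinct pairs $(i_1,\xi_1),\dots,(i_k,\xi_k),(i,\xi)$, I claim $\{(i,\xi)\}\in A_{i,\xi}(\gamma_0)\setminus(A_{i_1,\xi_1}(\gamma_0)\cup\dots\cup A_{i_k,\xi_k}(\gamma_0))$; this follows because the set of conditions $r=(C,\pi)$ with $\gamma_0\in\mathrm{dom}(\pi)$ and $\{(i,\xi)\}\in F_{\pi(\gamma_0)}$ is dense (a density argument just like Lemma \ref{densityQ}, adding the singleton to the $F$-part at coordinate $\gamma_0$ and noting its graph meets the old $\omega\times D$ only in $\{(i,\xi)\}$), so generically $\{(i,\xi)\}\in A_{i,\xi}(\gamma_0)$; and $\{(i,\xi)\}\notin A_{i_j,\xi_j}(\gamma_0)$ for each $j$ because $(i,\xi)\neq(i_j,\xi_j)$ and $\{(i,\xi)\}$ as a function has value $\xi_j$ at $i_j$ only if $(i,\xi)=(i_j,\xi_j)$. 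Third, the structural conclusion: having verified that the $\mathcal{A}_i=(A_{\xi,i}(\gamma_0))_{\xi<\omega_2}$ are countably many pairwise disjoint antichains whose union generates $\mathcal{B}_{\gamma_0}$ and that condition (c) of Definition \ref{c-algebra} holds (this is exactly the separation property just established, phrased as $A_{\xi_1,i_1}(\gamma_0)\vee\dots\vee A_{\xi_m,i_m}(\gamma_0)\neq 1$ — note $1$ corresponds to all of $Fn_{\leq\omega}(\omega,\omega_2)\cap V$, and e.g. any singleton $\{(j,\zeta)\}$ with $(j,\zeta)$ avoiding the listed pairs is not covered), we conclude $\mathcal{B}_{\gamma_0}$ is a (c)-algebra; since $|\mathcal{B}_{\gamma_0}|=\omega_2$ (it has $\omega_2$ generators and is generated by them), its Stone space $K_{\gamma_0}$ has weight $\omega_2$, and by the cited result of \cite{bell} (Theorems 2.1, 2.2) based on \cite{imageseberlein}, the Stone space of a (c)-algebra is a uniform Eberlein compact. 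This finishes the proof.

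I do not expect a genuine obstacle here: this is the $\mathbb{Q}$-analogue of Proposition \ref{UEP}, and the only points requiring a sentence of care are (i) checking that the density arguments localized to the single coordinate $\gamma_0$ genuinely go through in the product $\mathbb{R}$ — i.e. that extending the $\gamma_0$-coordinate while leaving all other coordinates and the Cohen part fixed yields a stronger condition — which is immediate from the definition of the product ordering and Lemma \ref{densityQ}; and (ii) noting that $A_{\xi,i}(\gamma_0)$ was deliberately defined using only $f\in Fn_{\leq\omega}(\omega,\omega_2)\cap V$, so that the ambient Boolean algebra $\wp(Fn_{\leq\omega}(\omega,\omega_2))$ into which $\mathcal{B}_{\gamma_0}$ is a subalgebra makes sense and the generators are genuinely elements of it. The cardinality count $|\mathcal{B}_{\gamma_0}|=\omega_2$ uses that in $V[G]$ the continuum is $\omega_2$ (added $\omega_2$ Cohen reals over a model of CH), so that there are at most $\omega_2$ finite Boolean combinations of the $\omega_2$ generators; since the generators are pairwise distinct nonempty sets the algebra has size exactly $\omega_2$.
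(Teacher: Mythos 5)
Your proposal is correct and takes essentially the same approach as the paper: the paper's own proof likewise uses Lemma \ref{densityQ} to show that the singleton $\{(i,\xi)\}$ witnesses both nonemptiness and condition (c), derives pairwise disjointness from the fact that the $f$'s are functions, and then invokes the cited theorems of Bell to conclude that the Stone space of the resulting (c)-algebra is uniform Eberlein compact. Your additional remarks on localizing the density argument to the coordinate $\gamma_0$ of the product and on the cardinality count are correct elaborations of points the paper leaves implicit.
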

\begin{proof} Fix $\gamma_0\in\omega_3$.
By \ref{densityQ} for all distinct $(i_1,\xi_1),...,(i_k,\xi_k), (i, \xi)$ we have 
$$\{(i,\xi)\}\in A_{i,\xi}(\gamma_0)\setminus (A_{i_1,\xi_1}(\gamma_0)\cup...\cup A_{i_k,\xi_k}(\gamma_0)).$$
So each  $A_{\xi,i}(\gamma_0)$ is nonempty and the condition (c) of Definition \ref{c-algebra} is satisfied. 
It follows directly from the fact that $f$'s are functions and that $(i, \xi) \in A_{\xi, i}(\gamma_0)$ that $(A_{\xi,i}(\gamma_0))_{\xi<\omega_2}$ are pairwise disjoint.
\end{proof}

Since now we work in $V$.
For each $\gamma_0 \in \omega_3$ and each $(i, \xi) \in \omega \times \omega_2$, let $\dot{A}_{\xi,i}(\gamma_0)$ be an $\mathbb{R}$-name for $A_{\xi,i}(\gamma_0)$. 

\begin{lemma}\label{omega2cc}
Assuming CH, given $(r_\alpha)_{\alpha < \omega_2} \subseteq \mathbb{R}$, there is $A \in [\omega_2]^{\omega_2}$ such that for every $\alpha_1< \dots < \alpha_m \in A$, there is $r \in \mathbb{R}$, $r \leq r_{\alpha_1}, \dots r_{\alpha_m}$. In particular, $\mathbb{R}$ is $\omega_2$-cc.
\end{lemma}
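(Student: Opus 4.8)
I would prove the statement as formulated, the $\omega_2$-c.c.\ being then immediate, since an antichain of size $\omega_2$ would, taking $m=2$, yield two compatible conditions. Write each $r_\alpha=(C_\alpha,\pi_\alpha)$ with $C_\alpha\in Fn_{<\omega}(\omega_2,2)$ and $\pi_\alpha\in\Pi$, and for $\gamma\in\dom(\pi_\alpha)$ write $\pi_\alpha(\gamma)=(D_{\alpha,\gamma},F_{\alpha,\gamma})\in\mathbb{Q}$. Call $\dom(\pi_\alpha)\in[\omega_3]^{\le\omega}$ the $\omega_3$-support of $r_\alpha$ and $S_\alpha=\dom(C_\alpha)\cup\bigcup_{\gamma\in\dom(\pi_\alpha)}D_{\alpha,\gamma}\in[\omega_2]^{\le\omega}$ its $\omega_2$-support.

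First I would run three successive thinnings, each keeping $\omega_2$ of the indices. (1) The finite $\Delta$-system lemma applied to the sets $\dom(C_\alpha)$, followed by a finite pigeonhole, gives a subfamily on which $(\dom C_\alpha)$ is a $\Delta$-system with root $R_C$ and $C_\alpha|_{R_C}$ is a fixed function; on such a subfamily the union of any finitely many $C_\alpha$'s is a function. (2) Under CH the $\Delta$-system lemma for countable sets applies at $\omega_2$ (because $\omega_1^\omega=\omega_1<\omega_2$), so applying it to the countable sets $\dom(\pi_\alpha)\subseteq\omega_3$ we get a subfamily on which $(\dom\pi_\alpha)$ is a $\Delta$-system with countable root $R$. (3) Applying it once more to the countable sets $S_\alpha$ we get a subfamily on which $(S_\alpha)$ is a $\Delta$-system with countable root $S$; note that automatically $R_C\subseteq S$.

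The crux is the counting step. Relative to the fixed roots $R_C,R,S$ one assigns to each $r_\alpha$ an isomorphism type recording the order type of $S_\alpha$ and the way $S$ sits inside it, the order type of $\dom(\pi_\alpha)$ and the way $R$ sits inside it, and the combinatorial data $C_\alpha$, $(D_{\alpha,\gamma})_\gamma$ and $(F_{\alpha,\gamma})_\gamma$ read against canonical copies of $S_\alpha$ and $\dom(\pi_\alpha)$. Under CH there are at most $\omega_1$ such types: there are $\omega_1$ countable order types, $\omega_1$ countable families of countable partial functions on a fixed countable set, and $\omega_1^\omega=\omega_1$ ways of distributing such data over the countably many coordinates of $\dom(\pi_\alpha)$. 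Since $\omega_2$ is regular, some type is shared by an $\omega_2$-sized set $A$ of indices, and for $\alpha,\beta\in A$ sameness of type supplies an order-preserving bijection $S_\alpha\to S_\beta$ which is the identity on $S$ and carries the data of $r_\alpha$ to that of $r_\beta$, together with an order-preserving bijection $\dom(\pi_\alpha)\to\dom(\pi_\beta)$ which is the identity on $R$. Hence for each $\gamma\in R$ the conditions $\pi_\alpha(\gamma)$ $(\alpha\in A)$ are pairwise isomorphic in the sense of Definition \ref{isomorphicQ}, and their $D$-parts form a $\Delta$-system with (constant) root $D_{\alpha,\gamma}\cap S$, since $D_{\alpha,\gamma}\cap D_{\beta,\gamma}\subseteq S_\alpha\cap S_\beta=S$.

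Finally, given $\alpha_1<\dots<\alpha_m$ in $A$, I would build a common lower bound $r=(C,\pi)$: put $C=C_{\alpha_1}\cup\dots\cup C_{\alpha_m}$ (a function, by (1)); put $\dom(\pi)=\dom(\pi_{\alpha_1})\cup\dots\cup\dom(\pi_{\alpha_m})$; for $\gamma$ belonging to exactly one $\dom(\pi_{\alpha_j})$ (equivalently $\gamma\notin R$) set $\pi(\gamma)=\pi_{\alpha_j}(\gamma)$; and for $\gamma\in R$ let $\pi(\gamma)\in\mathbb{Q}$ be a common lower bound of $\pi_{\alpha_1}(\gamma),\dots,\pi_{\alpha_m}(\gamma)$, furnished by Lemma \ref{minimalAmalgamationQ} (only the existence of the amalgam is used, not its disjointness clause) via the $\Delta$-system and pairwise-isomorphism facts just established. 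Checking $r\in\mathbb{R}$ and $r\le r_{\alpha_j}$ for every $j$ is then routine, the point being that the $\omega_3$-supports form a $\Delta$-system with root $R$, so the coordinates handled by the "exactly one $j$" clause are unambiguous. The main obstacle is precisely the calibration of the notion of type in the counting step: it must be fine enough that equality of type delivers the order-isomorphisms demanded by the hypotheses of Lemma \ref{minimalAmalgamationQ}, yet coarse enough to assume only $\omega_1$ values under CH. All the rest is a mild elaboration of arguments already carried out in the excerpt, in particular in the proof of the $\omega_2$-c.c.\ for $\mathbb{Q}$.
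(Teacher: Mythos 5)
Your proof is correct and follows essentially the same route as the paper's: finite $\Delta$-system plus agreement on the root for the Cohen coordinates, the countable $\Delta$-system lemma under CH for the $\Pi$-supports, a CH counting of isomorphism types to make the coordinates at the root pairwise isomorphic with $\Delta$-system $D$-parts, and then a coordinatewise lower bound using Lemma \ref{minimalAmalgamationQ} at root coordinates and the original condition elsewhere. The only (harmless) cosmetic difference is that you extract the $\Delta$-system property of the $D$-parts from a single global $\omega_2$-support $S_\alpha$, whereas the paper thins out coordinatewise for each $\xi$ in the root of the supports.
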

\begin{proof}
Let $(r_\alpha)_{\alpha<\omega_2} \subseteq \mathbb{R}$, where each $r_\alpha = (C_\alpha, \pi_\alpha)$. Let $B_\alpha = dom(\pi_\alpha) \in [\omega_3]^{\leq \omega}$. Using CH and the $\Delta$-system lemma for countable sets, there is $A'\in [\omega_2]^{\omega_2}$ such that $(B_\alpha)_{\alpha \in A'}$ is a $\Delta$-system with countable root $B$ and that $(dom(C_\alpha))_{\alpha \in A'}$ is a $\Delta$-system  with finite root $\Delta$.  Using another standard counting argument we may assume that $C_\alpha|_\Delta = C_\beta|_\Delta$ for any $\alpha<\beta \in A'$. By thinning out using further counting arguments
and CH, there is $A \in [A']^{\omega_2}$ such that for every $\xi \in B$, $(D_{r_\alpha(\xi)}))_{\alpha \in A}$ is a $\Delta$-system with countable root $D_\xi$ and $(\pi_\alpha(\xi))_{\alpha \in A}$ are pairwise isomorphic. 

Fix $\alpha_1 < \dots < \alpha_m \in A$. Let us define $r=(C_r, \pi_r)$: put $C_r = C_{\alpha_1} \cup \dots \cup C_{\alpha_m}$ and $dom(\pi_r) = B_{\alpha_1} \cup \dots \cup B_{\alpha_m}$. For each $\xi \in B$, let $\pi_r(\xi) \in \mathbb{Q}$ be the condition given by Lemma \ref{minimalAmalgamationQ}, such that $\pi_r(\xi) \leq \pi_{\alpha_1}(\xi), \dots, \pi_{\alpha_m}(\xi)$. If $\xi \in B_{\alpha_k} \setminus B$, let $\pi_r(\xi) = \pi_{\alpha_k}(\xi)$.

Clearly $r \in \mathbb{R}$ and $C_r \leq C_{\alpha_k}$ by the definition, for all $1 \leq k \leq m$. Also, both for $\xi \in B$ and for $\xi \in B_{\alpha_k} \setminus B$, $\pi_r(\xi) \leq \pi_{\alpha_k}(\xi)$, so that $r \leq r_{\alpha_k}$ for all $1 \leq k \leq m$.
\end{proof}

\begin{theorem}
If $CH$ holds in $V$, then $V^{\mathbb{R}} \vDash$ ``there is no Banach space $X$ of density $\mathfrak{c} = \omega_2$ such that for every uniform Eberlein compact space $K$ of weight at most $\mathfrak{c}$, $C(K)$ can be isomorphically embedded into $X$''.
\end{theorem}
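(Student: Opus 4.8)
The plan is to run the proof of the preceding theorem almost verbatim, with $\mathbb{P}$, $\Sigma$, $\omega_1$, $\omega_2$ replaced throughout by $\mathbb{Q}$, $\mathbb{R}$, $\omega_2$, $\omega_3$, and Lemmas \ref{minimalAmalgamationQ} and \ref{strongAmalgamationQ} used in place of \ref{minimalAmalgamation} and \ref{strongAmalgamation}; everything is carried out in the ground model $V\models\mathrm{CH}$. Supposing toward a contradiction that such an $X$ exists in $V^{\mathbb{R}}$, I would fix an $\mathbb{R}$-name $\dot{X}$ for it and names $(\dot{v}_\eta)_{\eta<\omega_2}$ for a dense subset of $\dot{X}$, and for each $F\in[\omega_2]^{<\omega}$ take a maximal antichain $A_F$ of $\mathbb{R}$ each of whose members decides whether $\Vert\sum_{\eta\in F}\dot{v}_\eta\Vert>2$ or $\leq 2$. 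Since $\mathbb{R}$ is $\omega_2$-cc by Lemma \ref{omega2cc}, each $A_F$ has size at most $\omega_1$, and since every condition of $\mathbb{R}$ has countable support in $\Pi$, the union of the $\Pi$-supports of all conditions occurring in all the $A_F$ has size at most $\omega_2<\omega_3$; I would fix $\gamma_0\in\omega_3$ lying outside it. Using Proposition \ref{isomorphism} I would then write $\mathbb{R}\sim\mathbb{R}'\times\mathbb{Q}$, where $\mathbb{R}'=Fn_{<\omega}(\omega_2,2)\times\Pi_{\omega_3\setminus\{\gamma_0\}}\sim\mathbb{R}$ and the $\mathbb{Q}$-factor is the coordinate $\gamma_0$; under this identification each $A_F$ lies in $\mathbb{R}'$ and remains a maximal antichain there.

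The target space is $C(K_{\gamma_0})$ for $K_{\gamma_0}$ the Stone space of the c-algebra generated by $(A_{\xi,i}(\gamma_0))_{\xi<\omega_2,\,i<\omega}$, a uniform Eberlein compact of weight $\omega_2$, so by hypothesis there is an $\mathbb{R}$-name $\dot{T}$ for an isomorphic embedding of $C(K_{\gamma_0})$ into $\dot{X}$, which I may take to have $\Vert\dot{T}\Vert=1$. Given an arbitrary $r_0$, I would refine to $r_1\leq r_0$ and pick $m\in\omega$ with $r_1\Vdash 3\Vert\dot{T}^{-1}\Vert<\check{m}$. Then for each $\alpha<\omega_2$ I would choose an $m$-element set $\{\xi_1(\alpha),\dots,\xi_m(\alpha)\}\subseteq\omega_2\setminus\alpha$, distinct $i_1,\dots,i_m\in\omega$, ordinals $\eta_1(\alpha),\dots,\eta_m(\alpha)<\omega_2$, and a condition $r_\alpha=(r'_\alpha,q_\alpha)\leq r_1$ with $q_\alpha=(D_\alpha,F_\alpha)\in\mathbb{Q}$ and $\{\xi_1(\alpha),\dots,\xi_m(\alpha)\}\subseteq D_\alpha$ (enlarging $q_\alpha$ via Lemma \ref{densityQ}), forcing $\Vert\dot{T}(\chi_{[\dot{A}_{\xi_k(\alpha),i_k}(\gamma_0)]})-\dot{v}_{\eta_k(\alpha)}\Vert<1/m$ for each $k$. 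A thinning-out step comes next: using $\mathrm{CH}$, the $\Delta$-system lemma for countable sets, and a count of the isomorphism types of $\mathbb{Q}$-conditions exactly as in the proof of Lemma \ref{omega2cc}, together with one more application of Lemma \ref{omega2cc} to $(r'_\alpha)_\alpha$ through $\mathbb{R}'\sim\mathbb{R}$, I would pass to $A\in[\omega_2]^{\omega_2}$ such that $(D_\alpha)_{\alpha\in A}$ is a $\Delta$-system with countable root $D$, each $\xi_k(\alpha)\in D_\alpha\setminus D$, the $q_\alpha$ ($\alpha\in A$) are pairwise isomorphic, and any $m$ of the conditions $r'_\alpha$ ($\alpha\in A$) have a common lower bound in $\mathbb{R}'$.

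For the contradiction I would fix $\alpha_1<\dots<\alpha_m$ in $A$ and $s\leq r'_{\alpha_1},\dots,r'_{\alpha_m}$ in $\mathbb{R}'$. The ordinals $\xi_k(\alpha_j)$ lie in distinct petals $D_{\alpha_j}\setminus D$, hence are pairwise distinct, so the clopen sets $[\dot{A}_{\xi_j(\alpha_j),i_j}(\gamma_0)]$ are pairwise distinct; as $\dot{T}$ is an isomorphic embedding with $3\Vert\dot{T}^{-1}\Vert<m$ forced, their $\dot{T}$-images are forced to be pairwise more than $3/m$ apart, and together with the $1/m$-estimates this forces $\dot{v}_{\eta_1(\alpha_1)},\dots,\dot{v}_{\eta_m(\alpha_m)}$ to be pairwise distinct (if not, $s$ together with any common lower bound of the $q_{\alpha_j}$ below all $r_{\alpha_j}$ already forces a contradiction, so I am done). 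Hence $F:=\{\eta_1(\alpha_1),\dots,\eta_m(\alpha_m)\}$ has $m$ elements and $\sum_{\eta\in F}\dot{v}_\eta=\sum_{k=1}^m\dot{v}_{\eta_k(\alpha_k)}$. I would take $s'\in A_F$ and $s''\leq s,s'$ in $\mathbb{R}'$ and split on the decision $s'$ makes. If $s'$ forces $\Vert\sum_k\dot{v}_{\eta_k(\alpha_k)}\Vert>2$, I would amalgamate the $\gamma_0$-coordinates by Lemma \ref{minimalAmalgamationQ} to $q\leq q_{\alpha_1},\dots,q_{\alpha_m}$ forcing the relevant $\dot{A}$'s pairwise disjoint; then $(s'',q)\leq s'',r_{\alpha_1},\dots,r_{\alpha_m}$ forces $\Vert\sum_k\chi_{[\dot{A}_{\xi_k(\alpha_k),i_k}(\gamma_0)]}\Vert=1$, whence $\Vert\sum_k\dot{v}_{\eta_k(\alpha_k)}\Vert\leq 1+m\cdot\tfrac1m=2$, a contradiction. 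If instead $s'$ forces $\Vert\sum_k\dot{v}_{\eta_k(\alpha_k)}\Vert\leq 2$, I would use Lemma \ref{strongAmalgamationQ} to get $q\leq q_{\alpha_1},\dots,q_{\alpha_m}$ forcing $\bigcap_k\dot{A}_{\xi_k(\alpha_k),i_k}(\gamma_0)\neq\emptyset$; then $(s'',q)$ forces $\Vert\sum_k\chi_{[\dot{A}_{\xi_k(\alpha_k),i_k}(\gamma_0)]}\Vert=m$, hence $m/\Vert\dot{T}^{-1}\Vert\leq\Vert\dot{T}(\sum_k\chi_{[\dot{A}_{\xi_k(\alpha_k),i_k}(\gamma_0)]})\Vert\leq 2+m\cdot\tfrac1m=3$, contradicting $3\Vert\dot{T}^{-1}\Vert<m$. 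Since $r_0$ was arbitrary, a dense set of conditions forces that no such $\dot{T}$ exists, so $C(K_{\gamma_0})$ does not embed into $X$ in $V^{\mathbb{R}}$, contrary to the choice of $X$.

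I expect the main obstacle to be the same one flagged in the footnote to the $\omega_1$ case: $\dot{X}$ need not belong to any intermediate model, so the whole argument has to be run in $V$ with the full product $\mathbb{R}$, and the factorization $\mathbb{R}\sim\mathbb{R}'\times\mathbb{Q}$ is used only to peel off the single coordinate $\gamma_0$. Within that, the most delicate piece is the simultaneous thinning of $(r_\alpha)_{\alpha<\omega_2}$, which leans on $\mathrm{CH}$ both for the $\Delta$-system lemma for countable sets and for the bound $\mathfrak{c}=\omega_1$ on the number of isomorphism types of $\mathbb{Q}$-conditions — exactly the bookkeeping already performed in Lemma \ref{omega2cc}. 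The remaining points are routine: the Cohen factor $Fn_{<\omega}(\omega_2,2)$ is present only to force $\mathfrak{c}=\omega_2$ in $V^{\mathbb{R}}$ (needed because $\Pi$ is $\sigma$-closed by Proposition \ref{sigmaclosed} and hence preserves $\mathrm{CH}$) while being absorbed harmlessly into $\mathbb{R}'$; $\omega_1$ is preserved because $\Pi$ is $\sigma$-closed and the Cohen factor is Knaster, and cardinals $\geq\omega_2$ survive by the $\omega_2$-cc of Lemma \ref{omega2cc}; and the only structural novelty relative to the $\omega_1$ case — that $\mathbb{Q}$ is $\sigma$-closed rather than ccc, so the amalgamation lemmas must handle countable roots — is already accounted for by Lemmas \ref{minimalAmalgamationQ} and \ref{strongAmalgamationQ}.
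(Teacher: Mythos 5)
Your proposal is correct and follows the paper's proof essentially step for step: the same choice of $\gamma_0$ outside the supports of the antichains $A_F$, the same factorization $\mathbb{R}\sim\mathbb{R}'\times\mathbb{Q}$, and the same two-case contradiction via Lemmas \ref{minimalAmalgamationQ} and \ref{strongAmalgamationQ}. The only (harmless) divergence is that the paper exploits $\mathfrak{c}^\omega=\mathfrak{c}=\omega_2$ to enumerate \emph{all} of $\dot X$ rather than a dense subset, so it gets exact equalities $\dot T(\chi_{[\dot A_{\xi_k(\alpha),k}(\gamma_0)]})=\dot v_{\eta_k(\alpha)}$ and can use the simpler thresholds $1$ and $\Vert\dot T^{-1}\Vert<m$ in place of your $1/m$-approximations, thresholds $2$, and $3\Vert\dot T^{-1}\Vert<m$ carried over from the $\omega_1$ case; your version works just as well and even adds the (glossed-over) check that the $\eta_k(\alpha_k)$ are pairwise distinct.
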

\begin{proof}
First note that $\R \Vdash \mathfrak{c} = \omega_2$. By the product lemma (\cite{kunen}),
for any generic $G\subseteq \R$, the extension $V[G]$ is of the form
$V[H_1][H_2]$ where $H_1$ is a $\Pi$-generic over $V$
and $H_2$ is $\check Fn_{<\omega}(\omega_2, 2)$-generic over $V[H_1]$. 
The first forcing $\Pi$ satisfies the $\omega_2$-c.c. by Lemma \ref{omega2cc} and is $\sigma$-closed
by Proposition \ref{sigmaclosed}, so $V[H_1]$ has the same cardinals as in $V$ and satisfies the CH.
The second forcing is equal to $Fn_{<\omega}(\omega_2, 2)$ since finite functions are
the same in $V$ and $V[H_1]$. Hence $V[G]$ can be viewed as an extension of a model
of CH by the Cohen forcing which adds $\omega_2$ Cohen reals. It is well-known
(see \cite{kunen}) that  $\mathfrak{c} = \omega_2$ holds in such models.

We work in $V$. By contradiction, suppose that in $V^\mathbb{R}$ 
there is a Banach space which contains an isomorph of every UG Banach space of
density $\omega_2$. Let $\dot{X}$ be an $\mathbb{R}$-name for it. Since $\mathbb{R} \Vdash |d(\dot{X})| = \mathfrak{c} = \omega_2$, we get that $\mathbb{R} \Vdash |\dot{X}| = \mathfrak{c}^\omega = \omega_2$, so that there is a family of $\mathbb{R}$-names $(\dot{v}_\eta)_{\eta <\omega_2}$ such that $\mathbb{R} \Vdash \dot{X} = \{\dot{v}_\eta: \eta < \omega_2\}$. 

For each $F \in [\omega_2]^{<\omega}$, let $A_F \subseteq \mathbb{R}$ be a maximal antichain in $\mathbb{R}$ such that for every $r \in A_F$, 
$$\text{either }r \Vdash \Vert \sum_{\eta\in F}\dot{v}_{\eta} \Vert > 1 \quad \text{ or }r \Vdash \Vert \sum_{\eta\in F}\dot{v}_{\eta} \Vert \leq 1.$$
Since $\mathbb{R}$ is $\omega_2$-cc, each $A_F$ has cardinality at most $\omega_1$ and for each $(C,\pi) \in A_F$, $|dom(\pi)| \leq \omega$. Then, the set
$$\bigcup \{dom(\pi): (C,\pi) \in A_F \text{ for some } F \in [\omega_2]^{<\omega} \text{ and } C \in Fn_{<\omega}(\omega_2,2) \}$$
has cardinality at most $\omega_2$. Fix $\gamma_0 \in \omega_3$ such that for every $F \in [\omega_2]^{<\omega}$ and every $(C, \pi) \in A_F$, $\gamma_0 \notin dom(\pi)$.

In $V^\mathbb{R}$, let $K_{\gamma_0}$ be the Stone space of the c-algebra generated by the family $(A_{\xi,i}(\gamma_0))_{\xi\in \omega_2, i \in \omega}$ and let us show that $C(K_{\gamma_0})$ does not embed isomorphically into $\dot{X}$. For each $(i, \xi) \in \omega \times \omega_2$, let $\dot{A}_{\xi, i}$ be an $\mathbb{R}$-name for $A_{\xi,i}(\gamma_0)$.

Suppose there is $\dot{T}: C(K_{\gamma_0}) \rightarrow \dot{X}$ an isomorphic embedding and without loss of generality,  assume that $\mathbb{R} \Vdash \Vert \dot{T} \Vert = 1$.
Let $r'\in \R$ and find
$r''\leq r'$ and $m \in \omega$ such that $r'' \Vdash \Vert \dot{T}^{-1} \Vert <\check m$.

For each $\alpha< \omega_2$, let $\{\xi_k(\alpha): 1 \leq k \leq m\} \subseteq \omega_2 \setminus \alpha$ 
have cardinality $m$ and take $r_\alpha \in \mathbb{R}$ 
with $r_\alpha\leq r''$ and $\eta_1(\alpha), \dots \eta_m(\alpha) \in \omega_2$ such that 
$$r_\alpha \Vdash \forall 1 \leq k \leq m \quad \dot{T}(\chi_{[\dot{A}_{\check{\xi}_k(\alpha), k}(\gamma_0)]}) = \dot{v}_{\check{\eta}_k(\alpha)}.$$
Let $r_\alpha = (C_\alpha, \pi_\alpha)$, $s_\alpha = (C_\alpha, \pi_\alpha|_{\omega_2 \setminus \{\gamma_0\}}) \in Fn(\omega_2,2) \times \Pi_{\omega_2 \setminus \{\gamma_0\}}$ and $\pi_\alpha(\gamma_0) = (D_\alpha, F_\alpha)$. Without loss of generality, by \ref{densityQ} we may assume that $\xi_1(\alpha), \dots , \xi_m(\alpha) \in D_\alpha$. Using CH and the $\Delta$-system lemma for countable sets, we may assume that $(D_\alpha)_{\alpha<\omega_2}$ is a $\Delta$-system with a countable root $D$ and that $\xi_1(\alpha), \dots , \xi_m(\alpha) \in D_\alpha \setminus D$ for each $\alpha<\omega_2$
and that for each $\alpha<\omega_2$
the conditions $\pi_\alpha(\gamma_0)$ are isomorphic in the sense of Definition \ref{isomorphicQ}.

By Proposition \ref{isomorphism}, $Fn(\omega_2,2) \times \Pi_{\omega_2 \setminus \{\gamma_0\}}$ is isomorphic to $\mathbb{R}$. So, given $\alpha_1< \dots < \alpha_m$, we can apply Lemma \ref{omega2cc} to $s_{\alpha_1}, \dots, s_{\alpha_m}$ and get $s \in Fn(\omega_2,2) \times \Pi_{\omega_2 \setminus \{\gamma_0\}}$ such that $s \leq s_{\alpha_1}, \dots, s_{\alpha_m}$.

Let $F = \{ \eta_1(\alpha_1), \dots, \eta_m(\alpha_m) \}$ and since $A_F$ is a maximal antichain in $\mathbb{R}$ which is contained in $Fn(\omega_2,2) \times \Pi_{\omega_2 \setminus \{\gamma_0\}}$, $A_F$ is also a maximal antichain in $Fn(\omega_2,2) \times \Pi_{\omega_2 \setminus \{\gamma_0\}}$. Then, there is $s' \in A_F$ and $s'' \in Fn(\omega_2,2) \times \Pi_{\omega_2 \setminus \{\gamma_0\}}$ such that $s'' \leq s, s'$. 
Since $s''\leq s'$ and $s'\in A_F$, either
$$s'' \Vdash \Vert \dot{v}_{\check{\eta}_1(\alpha_1)} + \dots + \dot{v}_{\check{\eta}_m(\alpha_m)} \Vert \leq 1$$
or 
$$s'' \Vdash \Vert \dot{v}_{\check{\eta}_1(\alpha_1)} + \dots + \dot{v}_{\check{\eta}_m(\alpha_m)} \Vert > 1.$$

\textbf{Case 1.} $s''$ forces that $\Vert \dot{v}_{\check{\eta}_1(\alpha_1)} + \dots + \dot{v}_{\check{\eta}_m(\alpha_m)} \Vert > 1$.

In this case, let $q \in \mathbb{Q}$ be such that $q \leq \pi_{\alpha_1}(\gamma_0), \dots, \pi_{\alpha_m}(\gamma_0)$ obtained by Lemma \ref{minimalAmalgamationQ} and let $r = (s'', q) \in \mathbb{R}$. Then, $r \leq s''$, $r \leq r_{\alpha_1}, \dots, r_{\alpha_m}$ and $r \leq q\leq \pi_{\alpha_1}(\gamma_0), \dots, \pi_{\alpha_m}(\gamma_0)$. So,
$$r \Vdash \forall 1 \leq k < k'\leq m \quad \dot{A}_{\check{\xi}_k(\alpha_k),i_k}(\gamma_0)\cap \dot{A}_{\check{\xi}_{k'}(\alpha_{k'}),i_{k'}}(\gamma_0) = \emptyset$$
so that 
$$r \Vdash \Vert \chi_{[\dot{A}_{\check{\xi}_1(\alpha_1),i_1}(\gamma_0)]} + \dots + \chi_{[\dot{A}_{\check{\xi}_m(\alpha_m),i_m}(\gamma_0)]} \Vert = 1.$$
Since $r \leq r_{\alpha_1}, \dots, r_{\alpha_m}$, 
$$r \Vdash \forall 1 \leq k \leq m \quad \dot{T}(\chi_{[\dot{A}_{\check{\xi}_k(\alpha_k),i_k}(\gamma_0)]}) = \dot{v}_{\check{\eta}_k(\alpha_k)},$$
so that, using the fact that $\mathbb{R} \Vdash \Vert \dot{T} \Vert =1$, we get that
$$r \Vdash \Vert \dot{v}_{\check{\eta}_1(\alpha_1)} + \dots + \dot{v}_{\check{\eta}_m(\alpha_m)} \Vert \leq 
\Vert \chi_{[\dot{A}_{\check{\xi}_1(\alpha_1),i_1}(\gamma_0)]} + \dots + \chi_{[\dot{A}_{\check{\xi}_m(\alpha_m),i_m}(\gamma_0)]} \Vert = 1,$$
contradicting the fact that $r \leq s''$ and 
$$s''\Vdash \Vert \dot{v}_{\check{\eta}_1(\alpha_1)} + \dots + \dot{v}_{\check{\eta}_m(\alpha_m)} \Vert > 1.$$

\textbf{Case 2.} $s''$ forces that $\Vert \dot{v}_{\check{\eta}_1(\alpha_1)} + \dots + \dot{v}_{\check{\eta}_m(\alpha_m)} \Vert \leq 1$.

In this case, let $q \in \mathbb{Q}$ be such that $q \leq \pi_{\alpha_1}(\gamma_0), \dots, \pi_{\alpha_m}(\gamma_0)$ obtained by Lemma \ref{strongAmalgamationQ} and let $r = (s'', q) \in \mathbb{R}$. Then, $r \leq s''$, $r \leq r_{\alpha_1}, \dots, r_{\alpha_m}$ and $r \leq q\leq \pi_{\alpha_1}(\gamma_0), \dots, \pi_{\alpha_m}(\gamma_0)$. So,
$$r \Vdash \dot{A}_{\check{\xi}_1(\alpha_1),i_1}(\gamma_0) \cap \dots \cap \dot{A}_{\check{\xi}_{m}(\alpha_{m}),i_{m}}(\gamma_0) \neq \emptyset$$
so that 
$$r \Vdash \Vert \chi_{[\dot{A}_{\check{\xi}_1(\alpha_1),i_1}(\gamma_0)]} + \dots + \chi_{[\dot{A}_{\check{\xi}_m(\alpha_m),i_m}(\gamma_0)]} \Vert = m.$$
Since $r \leq r_{\alpha_1}, \dots, r_{\alpha_m}$, 
$$r \Vdash \forall 1 \leq k \leq m \quad \dot{T}(\chi_{[\dot{A}_{\check{\xi}_k(\alpha_k),i_k}(\gamma_0)]}) = \dot{v}_{\check{\eta}_k(\alpha_k)},$$
so that, using the fact that $\mathbb{R} \Vdash \Vert \dot{T}^{-1} \Vert < m$ and that $r \leq s''$, we get that
$$r \Vdash \Vert \chi_{[\dot{A}_{\check{\xi}_1(\alpha_1),i_1}]} + \dots + \chi_{[\dot{A}_{\check{\xi}_m(\alpha_m),i_m}]} \Vert \leq
\Vert \dot{T}^{-1} \Vert \cdot \Vert \dot{v}_{\check{\eta}_1(\alpha_1)} + \dots + \dot{v}_{\check{\eta}_m(\alpha_m)}\Vert 
< m,$$
which contradicts our assumption.

Since the condition $r'\in\R$ was arbitrary, we showed that a dense subset of $\R$ forces
the nonexistence of the embedding $\dot T$, hence it does not exist in $V[G]$.
\end{proof}

\bibliographystyle{amsplain}

\end{document}